\newtheorem{thm}{Theorem}[section]
\newtheorem{lem}[thm]{Lemma}
\newtheorem{defi}[thm]{Definition}
\theoremstyle{remark}
\newtheorem{comentario}{Remark}
\newcommand{\labitem}[2]{%
\def\@itemlabel{\textbf{#1}}
\item
\def\@currentlabel{#1}\label{#2}}
\date{}
\newcommand{\orlnor}{\|_{L^{\Phi}}}
\newcommand{\lphi}{L^{\Phi}}
\newcommand{\lpsi}{L^{\Psi}}
\newcommand{\ephi}{E^{\Phi}}
\newcommand{\claseor}{C^{\Phi}}
\newcommand{\wphi}{W^{1}\lphi}
\newcommand{\wphiet}{W^{1}\ephi_T}
\newcommand{\sobnor}{\|_{W^{1}\lphi}}
\newcommand{\domi}{\mathcal{E}^{\Phi}_d(\lambda)}
\renewcommand{\b}[1]{\boldsymbol{#1}}
\newcommand{\rr}{\mathbb{R}}
\newcommand{\ccdot}{\b{\cdot}}
\renewcommand{\leq}{\leqslant}
\renewcommand{\geq}{\geqslant}
\newcommand{\epsi}{E^{\Psi}}
\newcounter{example}
\title{Periodic solutions of
Euler-Lagrange equations with sublinear potentials   in an Orlicz-Sobolev space setting}
\author{Sonia Acinas\thanks{Partially supported by a  UNSL grant PROICO 30412, UNRC grant number 18/C417  and SCyT-FCEyN-UNLPam grant number PI67(M)}\\
Dpto. de Matem\'atica, Facultad de Ciencias Exactas y Naturales\\
Universidad Nacional de La Pampa\\
(6300) Santa Rosa, La Pampa, Argentina\\
sonia.acinas@gmail.com\\
\vspace{3mm}
Fernando D. Mazzone \thanks{The author is partially supported by a UNRC grant number 18/C417 and SCyT-FCEyN-UNLPam grant number PI67(M).}\\
\vspace{3mm}
Dpto. de Matem\'atica, Facultad de Ciencias Exactas, F\'{\i}sico-Qu\'{\i}micas y Naturales\\
Universidad Nacional de R\'{i}o Cuarto\\
(5800) R\'{\i}o Cuarto, C\'ordoba, Argentina,\\
fmazzone@exa.unrc.edu.ar\\}
\date{}
\begin{document}

\maketitle

\begin{abstract}

In this paper, we obtain existence results of periodic solutions of hamiltonian systems in the Orlicz-Sobolev space $\wphi([0,T])$.
We employ the direct method of calculus of variations and we consider  a potential  function $F$ satisfying the inequality
$|\nabla F(t,x)|\leq b_1(t) \Phi_0'(|x|)+b_2(t)$, with  $b_1, b_2\in L^1$ and  certain $N$-functions $\Phi_0$.
\end{abstract}

\section{Introduction}
This paper deals with system  of equations of the type:

\begin{equation}\label{ProbPrin-gral}
    \left\{%
\begin{array}{ll}
  \frac{d}{dt} D_{y}\mathcal{L}(t,u(t),u'(t))= D_{x}\mathcal{L}(t,u(t),u'(t)) \quad \hbox{a.e.}\ t \in (0,T),\\
    u(0)-u(T)=u'(0)-u'(T)=0,
\end{array}%
\right.
\end{equation}
where $\mathcal{L}:[0,T]\times\rr^d\times\rr^d\to\rr$, $d\geq 1$, is called the \emph{Lagrange function} or \emph{lagrangian} and the unknown function  $u:[0,T]\to\rr^d$ is absolutely continuous. In other words, we are interested in  finding \emph{periodic weak solutions} of \emph{Euler-Lagrange system of ordinary differential equations}.

The problem \eqref{ProbPrin-gral} comes from a variational one, that is,  the equation in  \eqref{ProbPrin-gral} is the Euler-Lagrange equation associated to the \emph{action integral}

\begin{equation}\label{integral_accion}
I(u)=\int_{0}^T \mathcal{L}(t,u(t),u'(t))\ dt.
\end{equation}

This topic was deeply addressed for the \emph{Lagrange function}
\begin{equation}\label{eq:lagrange_cuad}
\mathcal{L}_{p,F}(t,x,y)=\frac{|y|^p}{p}+F(t,x),
\end{equation}
for $1<p<\infty$. For example, the classic book  \cite{mawhin2010critical} deals mainly with problem \eqref{ProbPrin-gral} for the lagrangian $\mathcal{L}_{2,F}$ and through various methods: direct, dual action, minimax, etc. The results in \cite{mawhin2010critical} were extended and improved in several articles, see  \cite{tang1995periodic,tang1998periodic,wu1999periodic,tang2001periodic,zhao2004periodic}  to cite some examples. Lagrange functions \eqref{eq:lagrange_cuad} for arbitrary $1<p<\infty$ are considered in  \cite{Tian2007192,tang2010periodic} and in this case \eqref{ProbPrin-gral}  is reduced to the $p$-laplacian system
\begin{equation*}\label{ProbP-lapla}
    \left\{%
\begin{array}{ll}
   \frac{d}{dt}\left(u'(t)|u'|^{p-2}\right) = \nabla F(t,u(t)) \quad \hbox{a.e.}\ t \in (0,T),\\
    u(0)-u(T)=u'(0)-u'(T)=0.
\end{array}%
\right.
\end{equation*}

In this context, it  is customary to call $F$ a  \emph{potential function}, and it is assumed that $F(t,x)$ is differentiable with respect to $x$ for a.e. $t\in [0,T]$ and the following conditions hold:
\begin{enumerate}
\labitem{(C)}{item:condicion_c} $F$ and its gradient $\nabla F$, with respect to $x\in\rr^d$,  are  Carath\'eodory functions, i.e. they are measurable functions with respect to $t\in [0,T]$ for every  $x\in\rr^d$, and they are continuous functions with  respect to  $x\in\rr^d$ for a.e. $t \in [0,T]$.
 \labitem{(A)}{item:condicion_a}  For   a.e. $t\in [0,T]$,
\begin{equation}
|F(t,x)| + |\nabla F(t,x)|  \leq a(|x|)b(t).
\end{equation}
In this inequality, it is assumed that the function  $a:[0,+\infty)\to [0,+\infty)$ is continuous and non decreasing,
and $0\leq b\in L^1([0,T],\rr)$.
\end{enumerate}

In \cite{ABGMS2015} it was treated  the case of a lagrangian $\mathcal{L}$ which is lower bounded by a Lagrange function
\begin{equation}\label{eq:lagrange_phi}
\mathcal{L}_{\Phi,F}(t,x,y)=\Phi(|y|)+F(t,x),
\end{equation}
where  $\Phi$ is an $N$-function (see section \ref{preliminares} for the definition of this concept).
In the paper \cite{ABGMS2015} it was also assumed  a condition of \emph{bounded oscillation} on $F$.
In this current paper we will study a condition of \emph{sublinearity} (see  \cite{tang1998periodic,wu1999periodic,zhao2004periodic,tang2010periodic,zhao2005existence}) on $\nabla F$ for the lagrangian  $\mathcal{L}_{\Phi,F}$, or more generally for lagrangians which are lower bounded by $\mathcal{L}_{\Phi,F}$.

The paper is organized as follows. In section \ref{preliminares}, we give preliminaries facts on $N$-functions and Orlicz-Sobolev spaces of functions. Section 3 is devoted to the main result of this work and it also includes an auxiliary lemma of vital importance. Section \ref{sec:proofs} contains the proofs and section \ref{sec:examples} provides an application of our result to a concrete case.

\section{Preliminaries}\label{preliminares}

For reader convenience, we give a short introduction to Orlicz and Orlicz-Sobolev spaces of vector valued functions. Classic references for these topics are \cite{adams_sobolev,KR,rao1991theory,M}.

Hereafter we denote  by $\mathbb{R}^+$  the set of all non negative real numbers. A function $\Phi:\mathbb{R}^+\to \mathbb{R}^+ $ is called an \emph{$N$-function} if $\Phi$ is convex and it also satisfies that
\[
\lim_{t\to+\infty}\frac{\Phi(t)}{t}=+\infty\quad\text{and}\quad \lim_{t\to 0}\frac{\Phi(t)}{t}=0.
\]
In addition,  in this paper for the sake of simplicity  we assume that $\Phi$ is differentiable and we call $\varphi$  the derivative of $\Phi$.
On these assumptions, $\varphi:\mathbb{R}^+\rightarrow \mathbb{R}^+$ is a homeomorphism whose inverse will be denoted by $\psi$.
We write $\Psi$ for the primitive of $\psi$ that satisfies $\Psi(0)=0$. Then, $\Psi$ is an $N$-function which  is known as the \emph{complementary function} of $\Phi$.

 We recall that an $N$-function $\Phi(u)$ has \emph{principal part} $f(u)$ if $\Phi(u)=f(u)$ for large values of the argument (see \cite[p. 16]{KR} and \cite[Sec. 7]{KR} for  properties of principal part).

There exist several orders and equivalence relations between $N$-functions (see \cite[Sec. 2.2]{rao1991theory}).
Following \cite[Def. 1, pp. 15-16]{rao1991theory} we say that the   $N$-function $\Phi_2$ is \emph{stronger} than the $N$-function  $\Phi_1$, in symbols  $\Phi_1\prec\Phi_2$, if  there exist $a>0$ and $x_0\geq 0$ such that
\begin{equation}\label{eq:prec}\Phi_1(x)\leq \Phi_2(ax), \quad x\geq x_0.\end{equation}
 The $N$-functions  $\Phi_1$ and   $\Phi_2$ are \emph{equivalent} ($\Phi_1\sim\Phi_2$)  when  $\Phi_1\prec\Phi_2$ and $\Phi_2\prec\Phi_1$.
We say that  $\Phi_2$ is \emph{essentially stronger} than  $\Phi_1$  ($\Phi_1\llcurly\Phi_2$) if and only if for every $a>0$ there exists $x_0=x_0(a)\geq 0$ such that \eqref{eq:prec} holds. Finally, we say that  $\Phi_2$ is \emph{completely stronger} than  $\Phi_1$  ($\Phi_1\closedprec\Phi_2$) if and only if for every $a>0$ there exist $K=K(a)>0$ and  $x_0=x_0(a)\geq 0$ such that

\begin{equation}\label{eq:prec2}\Phi_1(x)\leq K\Phi_2(ax), \quad x\geq x_0.\end{equation}

We also say that a non decreasing function $\eta:\mathbb{R}^+\rightarrow \mathbb{R}^+$ satisfies the  \emph{$\Delta_2^{\infty}$-condition}, denoted by $\eta \in \Delta_2^{\infty}$,
if there exist  constants $K>0$ and  $x_0\geq 0$ such that
\begin{equation}\label{delta2defi}\eta(2x)\leq K\eta(x),
\end{equation}
for every $x\geq x_0$. We note that $\eta \in \Delta_2^{\infty}$ if and only if $\eta\closedprec\eta$.
If $x_0=0$,  the function   $\eta:\mathbb{R}^+\rightarrow \mathbb{R}^+$ is said to satisfy the
\emph{$\Delta_2$-condition} ($\eta \in \Delta_2$).
If there exists $x_0>0$ such that  inequality \eqref{delta2defi} holds for $x\leq x_0$,
we will say that $\Phi$ satisfies the
\emph{$\Delta_2^0$-condition} ($\Phi\in\Delta_2^0$).

We denote by $\alpha_{\eta}$ and $\beta_{\eta}$ the so called  \emph{Matuszewska-Orlicz indices} of the function $\eta$, which are defined next. Given
an increasing, unbounded, continuous function   $\eta:[ 0,+\infty)\to [0,+\infty)$ such that $\eta(0)=0$, we define
\begin{equation}\label{MO_indices}
    \alpha_{\eta}:=\lim\limits_{t\to 0^{+}}\frac{\log \left (\sup\limits_{u>0}\frac{\eta(t u)}{\eta(u)} \right ) }{\log(t)},\quad
    \beta_{\eta}:=\lim\limits_{t\to +\infty}\frac{\log \left  (\sup\limits_{u>0}\frac{\eta(t u)}{\eta(u)}\right )}{\log(t)}.
\end{equation}
It is known that the previous limits exist and  $0\leq \alpha_{\eta}\leq \beta_{\eta}\leq +\infty$
(see \cite[p. 84]{M}). The relation $\beta_{\eta}<+\infty$ holds true if and only if $\eta \in \Delta_2$
(\cite[Thm. 11.7]{M}). If $(\Phi,\Psi)$ is a complementary pair of  $N$-functions then
\begin{equation}\label{compl_ind}
 \frac{1}{\alpha_{\Phi}}+\frac{1}{\beta_{\Psi}}=1,
\end{equation}
(see \cite[Cor. 11.6]{M}). Therefore $1\leq \alpha_{\Phi}\leq\beta_{\Phi}\leq \infty $.

 If $\eta$ is an increasing function that satisfies the $\Delta_2$-condition, then $\eta$ is controlled by above and below
 by power functions (\cite[Sec. 1]{Gustavsson1977}, \cite[Eq. (2.3)-(2.4)]{fiorenza1997indices} and \cite[Thm. 11.13]{M}).   More concretely, for every $\epsilon>0$ there exists a
constant $K=K(\eta,\epsilon)$ such that, for every $t,u\geq 0$,
\begin{equation}\label{delta2-potencias}
    K^{-1}\min\big\{t^{\beta_{\eta}+\epsilon},t^{\alpha_{\eta}-\epsilon} \big\}\eta(u)\leq \eta(t u)\leq
    K\max\big\{t^{\beta_{\eta}+\epsilon},t^{\alpha_{\eta}-\epsilon} \big\}\eta(u).
\end{equation}

Let $d$ be a positive integer. We denote by $\mathcal{M}:=\mathcal{M}([0,T],\rr^d)$  the set of all measurable functions defined on $[0,T]$ with values on $\mathbb{R}^d$ and  we write $u=(u_1,\dots,u_d)$ for  $u\in \mathcal{M}$. For the set of functions $\mathcal{M}$, as for other similar sets, we will omit the reference to codomain $\mathbb{R}^d$ when $d=1$.

Given  an $N$-function $\Phi$ we define the \emph{modular function}
$\rho_{\Phi}:\mathcal{M}\to \mathbb{R}^+\cup\{+\infty\}$ by
\[\rho_{\Phi}(u):= \int_0^T \Phi(|u|)\ dt.\]
Here $|\cdot|$ is the euclidean norm of $\mathbb{R}^d$.
Now, we introduce the \emph{Orlicz class} $C^{\Phi}=C^{\Phi}([0,T],\rr^d)$   by setting
\begin{equation}\label{claseOrlicz}
  C^{\Phi}:=\left\{u\in \mathcal{M} | \rho_{\Phi}(u)< \infty \right\}.
\end{equation}
The \emph{Orlicz space} $\lphi=L^{\Phi}([0,T],\rr^d)$ is the linear hull of $\claseor$;
equivalently,
\begin{equation}\label{espacioOrlicz}
\lphi:=\left\{ u\in \mathcal{M}| \exists \lambda>0: \rho_{\Phi}(\lambda u) < \infty   \right\}.
\end{equation}
  The Orlicz space $\lphi$ equipped with the \emph{Orlicz norm}
\[
\|  u  \orlnor:=\sup \left\{  \int_0^T u\b{\cdot} v\ dt \big| \rho_{\Psi}(v)\leq 1\right\},
\]
is a Banach space. By $u\b{\cdot} v$ we denote the usual dot product in $\mathbb{R}^{d}$ between $u$ and $v$.

The following  inequality holds for any $u\in\lphi$
\begin{equation}\label{amemiya-ine}
\|u\orlnor\leq \frac{1}{k}\left\{1+\rho_{\Phi}(ku)\right\},\quad\text{for every } k>0.
\end{equation}
In fact, $\|u\orlnor$ is the infimum for $k>0$ of the right hand side in above expression  (see \cite[Thm. 10.5]{KR} and \cite{hudzik2000amemiya}).

The subspace $\ephi=\ephi([0,T],\rr^d)$ is defined as the closure in $\lphi$ of the subspace $L^{\infty}([0,T],\rr^d)$ of all $\mathbb{R}^d$-valued essentially bounded functions. It has shown that  $\ephi$ is the only one maximal subspace contained in the Orlicz class $\claseor$, i.e.
$u\in\ephi$ if and only if $\rho_{\Phi}(\lambda u)<\infty$ for any $\lambda>0$. The equality $\lphi=\ephi$ is true if and only if $\Phi\in\Delta_2^{\infty}$.

A generalized version of \emph{H\"older's inequality} holds in Orlicz spaces (see \cite[Thm. 9.3]{KR}). Namely, if $u\in\lphi$ and $v\in\lpsi$ then $u\cdot v\in L^1$ and
\begin{equation}\label{holder}
\int_0^Tv\cdot u\ dt\leq \|u\orlnor\|v\|_{L^{\Psi}}.
\end{equation}

If $X$ and $Y$ are  Banach spaces such that  $Y\subset X^*$, we denote by $\langle\cdot,\cdot\rangle:Y\times X\to\mathbb{R}$ the bilinear pairing  map given by $\langle x^*,x\rangle=x^*(x)$. H\"older's inequality shows that $\lpsi\subset \left[\lphi\right]^*$, where the pairing
$\langle v, u\rangle$
is defined by
\begin{equation}\label{pairing}
  \langle v,u\rangle=\int_0^Tv\cdot u\ dt,
\end{equation}
with  $u\in\lphi$ and $v\in\lpsi$.
 Unless $\Phi \in \Delta_2^{\infty}$, the relation $\lpsi= \left[\lphi\right]^*$ will not be satisfied.
In general, it is true  that  $\left[\ephi\right]^*=\lpsi$.

We define the \emph{Sobolev-Orlicz space} $\wphi$ (see \cite{adams_sobolev}) by
\[\wphi:=\{u| u \hbox{ is absolutely continuous on $[0,T]$ and } u'\in \lphi\}.\]
$\wphi$ is a Banach space when equipped with the norm
\begin{equation}\label{def-norma-orlicz-sob}
\|  u  \|_{\wphi}= \|  u  \|_{\lphi} + \|u'\orlnor.
\end{equation}
And, we introduce the following subset of $\wphi$
\begin{equation}\label{def-esp-orlicz-sob-per}
\begin{split}
\wphi_T&=\{u\in\wphi|u(0)=u(T)\}.
\end{split}
\end{equation}

We will use repeatedly the decomposition $u=\overline{u}+\widetilde{u}$ for a function $u\in L^1([0,T])$  where $\overline{u} =\frac1T\int_0^T u(t)\ dt$ and $\widetilde{u}=u-\overline{u}$.

As usual, if $(X,\|\cdot\|_X)$ is a Banach space and $(Y,\|\cdot \|_Y)$ is a subspace of $X$,  we write $Y\hookrightarrow X$ and we say that $Y$ is \emph{embedded} in $X$  when the restricted identity map $i_Y:Y\to X$ is bounded. That is, there exists $C>0$ such that
$\|y\|_X\leq C\|y\|_Y$ for any $y\in Y$.  With this notation, H\"older's inequality states that  $\lpsi\hookrightarrow  \left[\lphi\right]^*$; and, it is easy to see that for every $N$-function $\Phi$ we have that $L^{\infty}\hookrightarrow\lphi \hookrightarrow L^1$.

The following simple  embedding lemma, whose proof can be found in \cite{ABGMS2015}, will be used several times.

\begin{lem}\label{inclusion orlicz} For every $u\in\wphi$, $\widetilde{u}\in L^{\infty}$ and
\begin{align}
 \|u\|_{L^{\infty}} &\leq\Phi^{-1}\left(\frac{1}{T}\right)\max\{1,T\}\|u\sobnor&\text{  (Sobolev's inequality).}\label{sobolev}\\
 \|\widetilde{u}\|_{L^{\infty}} &\leq T\Phi^{-1}\left(\frac{1}{T}\right)\|u'\orlnor&
\text{  (Sobolev-Wirtinger's inequality).}\label{wirtinger}
\end{align}

\end{lem}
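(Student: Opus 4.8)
The plan is to deduce both inequalities from a single scalar estimate: for every $w\in\lphi$,
\begin{equation*}
\|w\|_{L^1}\leq T\,\Phi^{-1}\!\left(\tfrac1T\right)\|w\orlnor .
\end{equation*}
I would obtain this by applying H\"older's inequality \eqref{holder} to the scalar function $|w|$ (whose Orlicz norm equals $\|w\orlnor$, since $\rho_\Phi$ depends only on $|w|$) paired against the constant function $\mathbf 1$, giving $\|w\|_{L^1}\leq\|w\orlnor\,\|\mathbf 1\|_{\lpsi}$. The only computation needed is the value of $\|\mathbf 1\|_{\lpsi}$. From the dual description of the Orlicz norm, $\|\mathbf 1\|_{\lpsi}=\sup\{\int_0^T v\,dt:\rho_\Phi(v)\leq1\}$, and Jensen's inequality for the convex function $\Phi$ with the probability measure $T^{-1}\,dt$ yields $\Phi\!\big(T^{-1}\!\int_0^T v\,dt\big)\leq T^{-1}\rho_\Phi(v)\leq T^{-1}$, hence $\int_0^T v\,dt\leq T\Phi^{-1}(1/T)$; taking $v\equiv\Phi^{-1}(1/T)$ shows the bound is attained, so $\|\mathbf 1\|_{\lpsi}=T\Phi^{-1}(1/T)$.

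Next I would prove the Sobolev--Wirtinger inequality \eqref{wirtinger}. Since $u$ is absolutely continuous we have $\widetilde u{}'=u'$ and $\int_0^T\widetilde u\,dt=0$. Averaging the identity $\widetilde u(t)-\widetilde u(s)=\int_s^t u'(r)\,dr$ over $s\in[0,T]$ and using $\int_0^T\widetilde u(s)\,ds=0$ gives $T\widetilde u(t)=\int_0^T\!\int_s^t u'(r)\,dr\,ds$. I prefer this averaging to evaluating $\widetilde u$ at a zero, because for vector-valued $u$ the components need not vanish at a common point. Bounding $|\int_s^t u'(r)\,dr|\leq\|u'\|_{L^1}$ for all $s,t$ then gives $|\widetilde u(t)|\leq\|u'\|_{L^1}$ for every $t$, so $\|\widetilde u\linf\leq\|u'\|_{L^1}$, and the scalar estimate of the first paragraph applied to $w=u'$ delivers exactly \eqref{wirtinger}.

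Finally, for Sobolev's inequality \eqref{sobolev} I would use the decomposition $u=\overline u+\widetilde u$. The constant part satisfies $|\overline u|\leq T^{-1}\|u\|_{L^1}\leq\Phi^{-1}(1/T)\|u\orlnor$ by the scalar estimate applied to $w=u$, while the oscillating part is controlled by \eqref{wirtinger}. Adding the two bounds gives $\|u\linf\leq\Phi^{-1}(1/T)\big(\|u\orlnor+T\|u'\orlnor\big)$, and factoring $\max\{1,T\}$ out of the parenthesis (so that the bracket becomes $\|u\sobnor$) yields the claim. The one step requiring genuine care is the evaluation of the constant-function norm $\|\mathbf 1\|_{\lpsi}$ via Jensen's inequality; once that is settled both embeddings follow by elementary manipulations, and the vector-valued nature of $u$ intervenes only in the Wirtinger step, which is precisely why the averaging argument is used there.
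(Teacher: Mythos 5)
Your proof is correct, and it is essentially the standard argument (the paper itself does not reprove this lemma but defers to \cite{ABGMS2015}, where the same ingredients appear): the identification $\|\mathbf 1\|_{\lpsi}=T\Phi^{-1}(1/T)$ via Jensen, the resulting embedding $\lphi\hookrightarrow L^1$, the averaged fundamental-theorem-of-calculus identity for $\widetilde u$, and the splitting $u=\overline u+\widetilde u$. All constants come out exactly as in \eqref{sobolev} and \eqref{wirtinger}; the only cosmetic remark is that your justification that $\||w|\orlnor=\|w\orlnor$ is cleanest via the Amemiya form \eqref{amemiya-ine} of the Orlicz norm, which depends only on $\rho_{\Phi}$ and hence only on $|w|$.
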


\section{Main result}

We begin with a lemma which establishes the coercivity of the modular function $\rho_{\Phi}(u)$ with respect to certain functions of the Orlicz norm $\Phi_0(\|u\orlnor|)$. This lemma generalizes \cite[Lemma 5.2]{ABGMS2015} in two directions.
Namely, certain power function is replaced by a more general $N$-function $\Phi_0$ and
the $\Delta_2$-condition  on $\Psi$ is relaxed to $\Delta_2^{\infty}$.
It is worth noting that the second improvement is more important than the first one.
And, we present the result here since the lemma introduces a function $\Phi^*$ that will play a significant role in the statement of our main theorem.

\begin{lem}\label{lem_coer}
Let $\Phi,\Psi$ be complementary $N$-functions with $\Psi \in \Delta_2^{\infty}$. Then,
there exists an $N$-function $\Phi^*$ being $\Phi^*\prec\Phi$,
such that  for every $N$-function $\Phi_0$ that satisfies $\Phi_0\llcurly\Phi^*$ and for every $k>0$, we have
\begin{equation}\label{eq:coer_mod}
\lim\limits_{\|u\orlnor\to \infty}
\frac{\int_0^T \Phi(|u|)\,dt}{\Phi_0(k\|u\orlnor)}=\infty.
\end{equation}
Reciprocally, if  \eqref{eq:coer_mod} holds for some $N$-function $\Phi_0$,  then $\Psi\in\Delta_2^{\infty}$.
\end{lem}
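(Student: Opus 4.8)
The plan is to reduce both directions to the behaviour of $\Phi$ at infinity, exploiting that $\Psi\in\Delta_2^{\infty}$ is equivalent to a \emph{robust} superlinear growth of $\Phi$ and that $[0,T]$ has finite measure. First I would record that $\Psi\in\Delta_2^{\infty}$ is equivalent to the condition $\nabla_2^{\infty}$ on $\Phi$: there are $c>1$ and $x_0\geq0$ with $\Phi(cx)\geq 2c\,\Phi(x)$ for $x\geq x_0$ (at the level of indices this is $\alpha_{\Phi}>1$, which follows from $\beta_{\Psi}<\infty$ via \eqref{compl_ind}). Iterating this inequality and interpolating by monotonicity yields a constant $K\geq1$ and an exponent $\theta=\log(2c)/\log c>1$ with
\[
\Phi(\lambda x)\geq K^{-1}\lambda^{\theta}\Phi(x)\qquad(x\geq x_0,\ \lambda\geq1),
\]
and in particular $\Phi(y)\geq K^{-1}x_0^{-\theta}\Phi(x_0)\,y^{\theta}$ for $y\geq x_0$. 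I then set $\Phi^{*}(t):=t^{\theta}$, which is an $N$-function, and the last bound shows at once (absorbing the constant by convexity) that $\Phi^{*}\prec\Phi$.

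The heart of the argument is the modular lower bound $\rho_{\Phi}(u)\geq c_0\,\Phi^{*}(\|u\orlnor)$ for $\|u\orlnor$ large. I would work with the Luxemburg norm $\|u\|_{(\Phi)}:=\inf\{\lambda>0:\rho_{\Phi}(u/\lambda)\leq1\}$, comparable to the Orlicz norm ($\|u\|_{(\Phi)}\leq\|u\orlnor\leq2\|u\|_{(\Phi)}$). Assuming $\rho_{\Phi}(u)<\infty$ (otherwise \eqref{eq:coer_mod} is trivial) and writing $\lambda=\|u\|_{(\Phi)}$, $w=|u|/\lambda$, continuity of the modular gives $\int_0^T\Phi(w)\,dt=1$, so $\rho_{\Phi}(u)=\int_0^T\Phi(\lambda w)\,dt$. \textbf{The step I expect to be the main obstacle} is to pass from the infinity‑only growth above (valid only for arguments $\geq x_0$) to a bound uniform on $[0,T]$, i.e.\ to control the set where $w$ is small; there the pointwise ratio $\Phi(\lambda w)/\Phi(w)$ degrades to merely linear in $\lambda$. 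Here finiteness of the measure is decisive: since $\int_{\{w<\delta\}}\Phi(w)\,dt\leq T\,\Phi(\delta)$, at least half of the unit modular mass must sit on $\{w\geq\delta_0\}$ with $\delta_0:=\Phi^{-1}\big(1/(2T)\big)$ a fixed constant, so the adversary cannot push the mass into the near‑linear regime. On that set, for $\lambda\geq x_0/\delta_0$ one has $\lambda w\geq x_0$ and the power estimate gives $\Phi(\lambda w)\geq c_1\lambda^{\theta}\Phi(w)$; integrating, $\rho_{\Phi}(u)\geq c_1\lambda^{\theta}\int_{\{w\geq\delta_0\}}\Phi(w)\,dt\geq\tfrac12 c_1\lambda^{\theta}$, and converting to the Orlicz norm yields $\rho_{\Phi}(u)\geq c_0\,\Phi^{*}(\|u\orlnor)$.

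With this estimate the forward implication follows from the definition of $\llcurly$. Given $\Phi_0\llcurly\Phi^{*}$, $k>0$ and $\epsilon>0$, the definition of $\llcurly$ (see \eqref{eq:prec}) applied with $a=\epsilon/k$ provides $\Phi_0(x)\leq\Phi^{*}\big((\epsilon/k)x\big)$ for large $x$; taking $x=k\|u\orlnor$ and using convexity of $\Phi^{*}$ (so $\Phi^{*}(\epsilon s)\leq\epsilon\,\Phi^{*}(s)$) gives
\[
\frac{\rho_{\Phi}(u)}{\Phi_0(k\|u\orlnor)}\geq\frac{c_0\,\Phi^{*}(\|u\orlnor)}{\Phi^{*}(\epsilon\|u\orlnor)}\geq c_0\,\epsilon^{-1}
\]
for $\|u\orlnor$ large. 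Since $\epsilon>0$ is arbitrary, the quotient tends to $+\infty$, which is \eqref{eq:coer_mod}.

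For the converse I argue by contraposition. Suppose $\Psi\notin\Delta_2^{\infty}$, i.e.\ $\Phi\notin\nabla_2^{\infty}$, and set $g(\lambda):=\liminf_{r\to\infty}\Phi(\lambda r)/\Phi(r)$. I first claim there is a sequence $\lambda_n\to\infty$ with $g(\lambda_n)\leq C\lambda_n$: otherwise $g(\lambda)/\lambda\to\infty$, whence for some fixed large $c$ one would have $\Phi(cr)\geq 2c\,\Phi(r)$ for all large $r$, i.e.\ $\Phi\in\nabla_2^{\infty}$, a contradiction. I then test \eqref{eq:coer_mod} on concentrated functions $u_n=c_n\mathbf{1}_{E_n}$ with $|E_n|=m_n=1/\Phi(r_n)\leq T$ and $c_n=\lambda_n\Phi^{-1}(1/m_n)=\lambda_n r_n$, where $r_n\to\infty$ nearly realises the liminf, so that $\Phi(\lambda_n r_n)/\Phi(r_n)\leq 2C\lambda_n$. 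Using the standard value $\|c\mathbf{1}_E\orlnor=c\,|E|\,\Psi^{-1}(1/|E|)$ together with $t\leq\Phi^{-1}(t)\Psi^{-1}(t)\leq2t$, one finds $\|u_n\orlnor\asymp\lambda_n\to\infty$, while $\rho_{\Phi}(u_n)=\Phi(\lambda_n r_n)/\Phi(r_n)\leq 2C\lambda_n$. Hence $\rho_{\Phi}(u_n)/\|u_n\orlnor$ stays bounded, so for \emph{every} $N$-function $\Phi_0$ the quotient $\rho_{\Phi}(u_n)/\Phi_0(k\|u_n\orlnor)\to0$ by superlinearity of $\Phi_0$, and \eqref{eq:coer_mod} fails. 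The only delicate point here is the extraction of the scales $\lambda_n,r_n$, which is precisely the liminf argument above.
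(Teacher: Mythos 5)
Your argument is correct in both directions, but it takes a genuinely different route from the paper's. For the forward implication the paper never touches the Luxemburg normalization: it first replaces $\Psi\in\Delta_2^{\infty}$ by an equivalent globally $\Delta_2$ function (Lemma \ref{lem:delta2-equiv-delta2-global}), then uses the submultiplicative inequality $\Phi(rs)\geq\Phi^*(r)\Phi(s)$ valid for \emph{all} $s\geq 0$ (Lemma \ref{lem:submultipliativa}), and feeds $r=\|u\orlnor/2$ into the Amemiya-type inequality \eqref{amemiya-ine} to obtain $\rho_{\Phi}(u)\geq\Phi^*\bigl(\|u\orlnor/2\bigr)$ in one line. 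You keep the growth information only near infinity and compensate with the measure-theoretic observation that at least half of the unit modular mass of $w=|u|/\|u\|_{(\Phi)}$ sits on $\bigl\{w\geq\Phi^{-1}\bigl(1/(2T)\bigr)\bigr\}$; this buys a self-contained proof that bypasses the nontrivial construction of Lemma \ref{lem:delta2-equiv-delta2-global}, at the cost of two small technical debts you should settle: (i) $\rho_{\Phi}\bigl(u/\|u\|_{(\Phi)}\bigr)$ is only $\leq 1$ in general, so run the argument at $\lambda'$ slightly below the Luxemburg norm, where the modular exceeds $1$, and let $\lambda'$ increase to it; (ii) your power estimate $\Phi(\lambda x)\geq K^{-1}\lambda^{\theta}\Phi(x)$ holds for $x\geq x_0$ but is applied on $\{w\geq\delta_0\}$ where possibly $\delta_0<x_0$ --- write $\lambda w=(\lambda w/x_0)\,x_0$ and absorb the ratio $\Phi(x_0)/\Phi(\delta_0)$ into the constant. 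For the converse the paper simply reduces to $\Phi_0(x)=|x|$ and cites \cite[Lemma 5.2]{ABGMS2015}, whereas you reprove that fact from scratch by testing on multiples of characteristic functions whose Orlicz norm is comparable to $\lambda_n$ while $\rho_{\Phi}(u_n)\leq 2C\lambda_n$; this is correct and more transparent (in fact the failure of $\nabla_2^{\infty}$ already gives $\liminf_{r\to\infty}\Phi(\lambda r)/\Phi(r)\leq 2\lambda$ for \emph{every} $\lambda>1$, so the extraction of the scales $\lambda_n$ you worried about is immediate), and it yields the slightly stronger conclusion that \eqref{eq:coer_mod} then fails for every $N$-function $\Phi_0$ and every $k>0$.
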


 We point out that this lemma can be applied to more cases than \cite[Lemma 5.2]{ABGMS2015}. For example, if $\Phi(u)=u^2$, $\Phi_1$ and $\Phi_0$ are  $N$-functions with principal parts equal to $u^2/\log u$ and $u^2/(\log u)^2$ respectively, then  \eqref{eq:coer_mod} holds for $\Phi_0$. On the other hand, $\Phi_0(|u|)$ is not dominated for any  power function $|u|^{\alpha}$ with $\alpha<2$.

As in  \cite{ABGMS2015} we will consider general Lagrange functions
$\mathcal{L}:[0,T]\times\rr^d\times\rr^d\to\rr$
satisfying the structure conditions
\begin{align}
|\mathcal{L}(t,x,y)| &\leq a(|x|)\left(b(t)+ \Phi\left(\frac{|y|}{\lambda}+f(t) \right)\right),&\tag{$A_1$}\label{eq:estru1}
\\
|D_{x}\mathcal{L}(t,x,y)| &\leq a(|x|)\left(b(t)+ \Phi\left(\frac{|y|}{\lambda}+f(t) \right)\right),&\tag{$A_2$}\label{eq:estru2}
\\
|D_{y}\mathcal{L}(t,x,y)| &\leq a(|x|)\left(c(t)+ \varphi\left(\frac{|y|}{\lambda}+f(t)\right)\right),
&\tag{$A_3$}\label{eq:estru3}
\end{align}
where $a\in C(\mathbb{R}^+,\mathbb{R}^+)$, $\lambda>0$, $\Phi$ is an $N$-function,
$\varphi$ is the continuous derivative of $\Phi$,
$b\in L^1_1([0,T])$,  $c\in\lpsi_1([0,T])$ and  $f\in \ephi_1([0,T])$. We denote by $\mathfrak{A}(a,b,c,\lambda,f,\Phi)$ the set of all Lagrange functions satisfying  \eqref{eq:estru1}, \eqref{eq:estru2} and \eqref{eq:estru3}.

In \cite{ABGMS2015} it was shown that if $\mathcal{L}\in \mathfrak{A}(a,b,c,\lambda,f,\Phi)$
then there  exists the Gate\^aux derivative of the integral functional $I$ defined by \eqref{integral_accion},
on the set
\[\domi:=\{u\in\wphi([0,T],\rr^d): d(u',\ephi)<\lambda\}.\]
We observe that the condition \ref{item:condicion_a} on the potential $F$ is equivalent to say that $\mathcal{L}_{\Phi,F}\in \mathfrak{A}(a,b,0,1,0,\Phi)$.

 Unlike what is usual in the literature, we do not assume the lagrangian $\mathcal{L}$  split into two terms,
one of them function of $y$ and the other one function of $(t,x)$.
We only suppose that $\mathcal{L}$ is lower bounded by a function of this type. More precisely, we assume that for every $(t,x,y)\in\rr\times\rr^d\times\rr^d$
\begin{equation}\label{eq:control_inf}
\mathcal{L} \geq \mathcal{L}_{\Phi,F},\quad \text{with}\;F\text{ satisfying \ref{item:condicion_a} \;and\; \ref{item:condicion_c},
\;\text{and}\;$\Phi$ being an $N$-function}.\tag{$A_4$}
\end{equation}
In addition, as usual we suppose that the time integral of $F$ satisfies certain coercivity condition, see \eqref{eq:propiedad-coercividad-phi0} below.  However,  all these hypotheses are not enough. It is also necessary to assume extra conditions on the potential $F$.
Several hypotheses were tested in the past years. The so called \emph{subconvexity} of $F$ was tried in \cite{wu1999periodic,tang1995periodic,zhao2004periodic} for semilinear equations and in \cite{xu2007some,tang2010periodic} for $p$-laplacian systems. Potentials $F$ satisfying the following inequality
\begin{equation}\label{holder_cont}
  \left| F(t,x_2)- F(t,x_1) \right|\leq b_1(t)(1+|x_2-x_1|^{\mu}).
\end{equation}
 were studied in \cite{ABGMS2015}.
Regarding \eqref{holder_cont}, it is interesting to notice that such inequality is equivalent to say the condition
$\|F(t,\cdot)\|_{BO}\in L^1([0,T])$, where $\|\cdot\|_{BO}$ denotes the seminorm defined in  \cite[p. 125]{zhu2012analysis} on the space of functions of bounded variations.

 In \cite{tang1998periodic, tang2010periodic} the authors  dealt with the $p$-laplacian case
with potentials $F$ such that
\begin{equation}\label{eq:cota_pot} |\nabla F(t,x)|\leq b_1(t)|x|^{\alpha}+b_2(t),
 \end{equation}
where  $b_1,b_2 \in L^1([0,T])$ and $\alpha<p$. Such potentials $F$
were called  \emph{sublinear nonlinearities}. In this paper, we are interested in studying this type of potentials,
but with more general bounds on $\nabla F$ which include $N$-functions instead of power functions;
namely, we will consider inequalities like
\begin{equation}\label{holder_cont-mu}
  \left| \nabla F(t,x) \right|\leq b_1(t)\Phi_0'(|x|)+b_2(t),
  \tag{$A_5$}
\end{equation}
with $\Phi_0$  a differentiable $N$-function and $b_1,b_2 \in L^1([0,T])$.

Next, we give our main result.  Here,  we will amend an
erroneous assumption made in the end of the proof of  \cite[Thm. 6.2]{ABGMS2015}.
There, it was assumed without discussion that  a minimum of  $I$ was on the domain of differentiability of $I$.

\begin{thm}\label{coercitividad-r}
Let $\Phi$ be an $N$-function whose complementary function $\Psi$ satisfies the $\Delta_2^{\infty}$-condition
and suppose that $\Phi^*$ is given by Lemma \ref{lem_coer}. Assume that the lagrangian $\mathcal{L}\in \mathfrak{A}(a,b,c,\lambda,f,\Phi)$ satisfies \eqref{eq:control_inf}, where
the potential $F$ fulfills  \ref{item:condicion_c}, \ref{item:condicion_a} and
the following conditions:
\begin{enumerate}
 \item  \eqref{holder_cont-mu}  for some $N$-function  $\Phi_0$ such that $\Phi_0\llcurly\Phi^*$.
 \item \begin{equation}\label{eq:propiedad-coercividad-phi0}
\lim_{|x|\to\infty}\frac{\int_{0}^{T}F(t,x)\ dt}{\Psi_1(\Phi_0'(2|x|))}=+\infty,\tag{$A_6$}
\end{equation}
for some $N$-function $\Psi_1$ with complementary function $\Phi_1$ satisfying $\Phi_0\llcurly \Phi_1\llcurly \Phi^*$.
\end{enumerate}

Then the action integral $I$ has a minimum $u\in\wphi([0,T],\rr^d)$ such that $d(u',\ephi)\leq \lambda$. If $d(u',\ephi)< \lambda$, the lagrangian $\mathcal{L}(t,x,y)$ is strictly convex  with respect to $y\in\rr^d$ and
$D_y\mathcal{L}(0,x,y)=D_y\mathcal{L}(T,x,y)$
then $u$ solves the problem \eqref{ProbPrin-gral}.

\end{thm}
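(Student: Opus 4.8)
The plan is to apply the direct method of the calculus of variations: show that $I$ is coercive and sequentially weakly lower semicontinuous on $\wphi_T([0,T],\rr^d)$, extract a minimizer, and then verify that the minimizer solves the Euler--Lagrange system. First I would establish \emph{coercivity}. Using the decomposition $u=\overline u+\widetilde u$ and the structure condition \eqref{eq:control_inf}, I would bound $I(u)$ below by $\int_0^T\Phi(|u'|)\,dt+\int_0^T F(t,u)\,dt$. The gradient bound \eqref{holder_cont-mu} lets me control the oscillation of $F$ along $u$: writing $F(t,u)=F(t,\overline u)+\int_0^1\nabla F(t,\overline u+s\widetilde u)\cdot\widetilde u\,ds$, the remainder is dominated by $\|\widetilde u\|_{L^\infty}\int_0^T\big(b_1(t)\Phi_0'(|\overline u|+|\widetilde u|)+b_2(t)\big)\,dt$. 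Via Sobolev--Wirtinger \eqref{wirtinger}, $\|\widetilde u\|_{L^\infty}\leq T\Phi^{-1}(1/T)\|u'\orlnor$, so the dangerous cross term pairs $\Phi_0'(|\overline u|)$ against $\|u'\orlnor$; I would split this using the Young-type inequality $st\leq\Psi_1(s)+\Phi_1(t)$ for the complementary pair $(\Phi_1,\Psi_1)$, sending the $\Psi_1(\Phi_0'(c|\overline u|))$ part into the coercive numerator of \eqref{eq:propiedad-coercividad-phi0} and the $\Phi_1(\|u'\orlnor)$ part to be absorbed by the modular term $\int_0^T\Phi(|u'|)\,dt$ through Lemma~\ref{lem_coer}, whose conclusion \eqref{eq:coer_mod} applies precisely because $\Phi_1\llcurly\Phi^*$. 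As $\|u\sobnor\to\infty$ either $\|u'\orlnor\to\infty$ (then the modular term dominates) or $|\overline u|\to\infty$ (then \eqref{eq:propiedad-coercividad-phi0} forces $I\to+\infty$), giving coercivity.

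Next I would address \emph{weak lower semicontinuity}. On a minimizing sequence, coercivity yields a bound in $\wphi_T$; since $\Psi\in\Delta_2^\infty$, the space $\lphi$ is reflexive, so I can extract a subsequence with $u_n\rightharpoonup u$ weakly in $\wphi_T$ and, by compactness of the embedding into $C([0,T],\rr^d)$ coming from Lemma~\ref{inclusion orlicz}, uniformly. The term $\int_0^T F(t,u_n)\,dt\to\int_0^T F(t,u)\,dt$ by \ref{item:condicion_a} and dominated convergence, while the convex part $\int_0^T\Phi(|u'|)\,dt$ is weakly lower semicontinuous because $\Phi$ is convex. Combined with \eqref{eq:control_inf}, this gives $I(u)\leq\liminf I(u_n)$, so $u$ is a minimizer. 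A separate argument, using that $d(u_n',\ephi)$ is controlled and that the minimizing sequence may be chosen inside the sublevel set, shows the minimizer satisfies $d(u',\ephi)\leq\lambda$.

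Finally, the \emph{Euler--Lagrange step}: assuming the strict-convexity and boundary hypotheses together with $d(u',\ephi)<\lambda$, the minimizer lies in $\domi$, so by the result of \cite{ABGMS2015} quoted before the theorem, $I$ is Gate\^aux differentiable at $u$ and $I'(u)=0$. A standard integration-by-parts/du~Bois-Reymond argument then converts the vanishing of the first variation into the weak form of \eqref{ProbPrin-gral}, the boundary condition $D_y\mathcal{L}(0,x,y)=D_y\mathcal{L}(T,x,y)$ supplying the periodicity of the momenta. The main obstacle I expect is the coercivity estimate: the bookkeeping that makes the three competing scales---$\Psi_1(\Phi_0'(c|\overline u|))$ absorbed by \eqref{eq:propiedad-coercividad-phi0}, $\Phi_1(\|u'\orlnor)$ absorbed by the modular via Lemma~\ref{lem_coer}, and the leftover from $\Phi_0'(|\widetilde u|)$---all cooperate is delicate, and it is exactly here that the chain $\Phi_0\llcurly\Phi_1\llcurly\Phi^*$ is needed. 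A secondary subtlety, flagged by the authors themselves, is ensuring the minimizer actually falls in the domain of differentiability $d(u',\ephi)<\lambda$ rather than only on its boundary, which is why the Euler--Lagrange conclusion is stated conditionally.
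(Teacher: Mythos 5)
Your coercivity argument reproduces the paper's: the same decomposition $u=\overline u+\widetilde u$, the same splitting of the cross term $\Phi_0'(2|\overline u|)\,\|u'\orlnor$ by Young's inequality for the complementary pair $(\Phi_1,\Psi_1)$, with $\Psi_1(\Phi_0'(2|\overline u|))$ fed into \eqref{eq:propiedad-coercividad-phi0} and $\Phi_1(\|u'\orlnor)$ absorbed by the modular via Lemma \ref{lem_coer}, and the dichotomy $|\overline u|\to\infty$ or $\|u'\orlnor\to\infty$ supplied by Lemma \ref{infinito-a-prom-upunto}. The Euler--Lagrange step also follows the paper's route. The problems are in the compactness and semicontinuity step, where there are two genuine gaps. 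First, the claim that $\Psi\in\Delta_2^{\infty}$ makes $\lphi$ reflexive is false: reflexivity of $\lphi$ over a finite interval requires \emph{both} $\Phi$ and $\Psi$ to satisfy $\Delta_2^{\infty}$, and the theorem is aimed precisely at the non-reflexive case --- in the example of Section \ref{sec:examples} the function $\Phi$ has exponential growth and $\lphi$ is not reflexive, so a bounded sequence $\{u_n'\}$ need not have a weakly convergent subsequence. The paper instead uses that $\lphi=[\epsi]^*$ with $\epsi$ separable, extracts a weak*-convergent subsequence $u_n'\overset{*}{\rightharpoonup}v$, and identifies $v=u'$ by testing against periodic smooth functions; your argument must be repaired along these lines.

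Second, your lower semicontinuity argument does not apply to the functional actually being minimized. The lagrangian is a general $\mathcal{L}\in\mathfrak{A}(a,b,c,\lambda,f,\Phi)$ that is only \emph{bounded below} by $\mathcal{L}_{\Phi,F}$ through \eqref{eq:control_inf}; weak lower semicontinuity of the minorant $\int_0^T\Phi(|u'|)\,dt+\int_0^TF(t,u)\,dt$ says nothing about weak lower semicontinuity of $I$ itself, so the step ``combined with \eqref{eq:control_inf}, this gives $I(u)\leq\liminf I(u_n)$'' is a non sequitur. The paper passes from weak* convergence in $\lphi$ to weak convergence in $L^1$ (using $L^\infty\subset\epsi$) and then invokes the semicontinuity result \cite[Lemma 6.1]{ABGMS2015} for $I$ directly; without that external lemma (or a restriction to $\mathcal{L}=\mathcal{L}_{\Phi,F}$, where convexity of $\Phi$ does the job) your proof of existence of a minimizer is incomplete. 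The assertion $d(u',\ephi)\leq\lambda$ is likewise left at the level of intent, but that is a minor point compared with the two above.
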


\begin{comentario} If $\Phi\in\Delta_2^{\infty}$ the the condition  $d(u',\ephi)\leq \lambda$ is automatically satisfied.
\end{comentario}

\section{Proofs}\label{sec:proofs}

The following result is analogous to some lemmata in $W^{1,p}$, see \cite[Lem. 1]{xu2007some}.
\begin{lem}\label{infinito-a-prom-upunto}
 $\|u\sobnor\to \infty$ iff  $(|\overline{u}|+\|u'\orlnor)\to \infty$.
\end{lem}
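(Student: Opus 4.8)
The plan is to show that the two quantities $\|u\sobnor$ and $|\overline{u}|+\|u'\orlnor$ are equivalent up to multiplicative constants; since each assertion ``$\to\infty$'' is to be read along an arbitrary sequence in $\wphi$, the equivalence of the quantities yields both implications at once. More precisely, I would produce constants $0<c\leq C<\infty$, depending only on $\Phi$ and $T$, such that
\begin{equation}\label{plan:equiv}
c\left(|\overline{u}|+\|u'\orlnor\right)\leq \|u\sobnor\leq C\left(|\overline{u}|+\|u'\orlnor\right)\qquad\text{for all }u\in\wphi.
\end{equation}
Recalling that $\|u\sobnor=\|u\orlnor+\|u'\orlnor$, the summand $\|u'\orlnor$ is already common to both sides, so the whole matter reduces to comparing $\|u\orlnor$ with $|\overline{u}|$.

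The main tool is the decomposition $u=\overline{u}+\widetilde{u}$. Since $\overline{u}$ is a constant vector, its norm as a constant function is $\|\overline{u}\orlnor=\kappa\,|\overline{u}|$, where $\kappa:=\|\mathbf{1}\orlnor$ denotes the $L^{\Phi}$-norm of the function identically equal to $1$; because $\mathbf{1}\in L^{\infty}\hookrightarrow\lphi$ and $\mathbf{1}\neq0$, one has $0<\kappa<\infty$. For the oscillating part I would use $|\widetilde{u}|\leq\|\widetilde{u}\linf\,\mathbf{1}$ pointwise together with the monotonicity of the Orlicz norm to get $\|\widetilde{u}\orlnor\leq\kappa\|\widetilde{u}\linf$, and then the Sobolev--Wirtinger inequality \eqref{wirtinger} of Lemma \ref{inclusion orlicz}, which gives $\|\widetilde{u}\linf\leq T\Phi^{-1}\left(\frac{1}{T}\right)\|u'\orlnor$. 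Hence $\|\widetilde{u}\orlnor\leq c_1\|u'\orlnor$ with $c_1:=\kappa\,T\Phi^{-1}\left(\frac{1}{T}\right)$.

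With these two facts, estimate \eqref{plan:equiv} follows from the triangle inequality in both directions. For the upper bound, $\|u\orlnor\leq\|\overline{u}\orlnor+\|\widetilde{u}\orlnor\leq\kappa|\overline{u}|+c_1\|u'\orlnor$, so $\|u\sobnor\leq\kappa|\overline{u}|+(c_1+1)\|u'\orlnor$, giving the constant $C$. For the lower bound, writing $\overline{u}=u-\widetilde{u}$ gives $\kappa|\overline{u}|=\|\overline{u}\orlnor\leq\|u\orlnor+c_1\|u'\orlnor$, whence $|\overline{u}|+\|u'\orlnor\leq\kappa^{-1}\|u\orlnor+(\kappa^{-1}c_1+1)\|u'\orlnor\leq c^{-1}\|u\sobnor$ after collecting coefficients. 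Choosing $C$ and $c$ from these inequalities completes the argument.

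I do not expect a genuine obstacle here: the statement is essentially the equivalence, on $\wphi$, of the full Sobolev--Orlicz norm with the ``average plus derivative'' functional, and all the analytic content is already packaged in the Sobolev--Wirtinger inequality \eqref{wirtinger}. The only points requiring a line of justification are the finiteness and strict positivity of $\kappa=\|\mathbf{1}\orlnor$ (immediate from $L^{\infty}\hookrightarrow\lphi$ and the fact that $\|\cdot\orlnor$ is a norm) and the monotonicity $\|v\orlnor\leq\|w\orlnor$ whenever $|v|\leq|w|$ a.e., which is clear from the definition of the Orlicz norm as a supremum of integrals against functions $v$ with $\rho_{\Psi}(v)\leq1$.
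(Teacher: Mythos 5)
Your proposal is correct and follows essentially the same route as the paper: decompose $u=\overline{u}+\widetilde{u}$, control $\|\widetilde{u}\orlnor$ by $\|u'\orlnor$ via the embedding $L^{\infty}\hookrightarrow\lphi$ and the Sobolev--Wirtinger inequality, and deduce a two-sided equivalence between $\|u\sobnor$ and $|\overline{u}|+\|u'\orlnor$. The only (immaterial) difference is in the reverse inequality, where the paper bounds $|\overline{u}|\leq \frac1T\|1\|_{\lpsi}\|u\orlnor$ directly by H\"older, while you use the triangle inequality $\|\overline{u}\orlnor\leq\|u\orlnor+\|\widetilde{u}\orlnor$ together with Wirtinger again.
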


\begin{proof}
By the decomposition $u=\overline{u}+\tilde{u}$ and some elementary operations,
we get
\begin{equation}\label{cota-u-lphi}
\|u\orlnor=
\|\overline{u}+\tilde{u}\orlnor\leq
\|\overline{u}\orlnor+\|\tilde{u}\orlnor=
|\overline{u}|\|1\orlnor+\|\tilde{u}\orlnor.
\end{equation}
It is known that $L^{\infty}\hookrightarrow\lphi$, i.e.
there exists $C_1=C_1(T)>0$ such that for any $\tilde{u}\in L^{\infty}$ we have
\[
\|\tilde{u}\orlnor
\leq
C_1 \|\tilde{u}\|_{L^{\infty}};
\]
and, applying  Sobolev's inequality,  we obtain Wirtinger's inequality,
that is there exists $C_2=C_2(T)>0$ such that
\begin{equation}\label{cota-u-tilde}
\|\tilde{u}\orlnor
\leq
C_2\|u'\orlnor.
\end{equation}
Therefore, from \eqref{cota-u-lphi}, \eqref{cota-u-tilde} and \eqref{def-norma-orlicz-sob},
we get
\[
\|u\sobnor\leq
C_3(|\overline{u}|+\|u'\orlnor)
\]
where $C_3=C_3(T)$. Finally, as $\|u\sobnor\to \infty$ we conclude that
$(|\overline{u}|+\|u'\orlnor)\to \infty$.

For the converse, we observe that
\[|\overline{u}|\leq \frac{1}{T}\|1\|_{\lpsi}\|u\orlnor.\]
Hence
\[|\overline{u}| +\|u'\orlnor\leq C_4(\|u\orlnor +\|u'\orlnor),\]
and the property under consideration is proved.
\end{proof}

\begin{lem}\label{lema:delta_2 y der} Let $\Phi$ be a not necessarily differentiable $N$-function and
let $\varphi$ be the right continuous derivative of $\Phi$. Then $\Phi\in\Delta_2^{\infty}$ (
$\Phi\in\Delta_2$) iff $\varphi\in\Delta_2^{\infty}$ (
$\varphi\in\Delta_2$).
\end{lem}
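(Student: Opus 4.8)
The plan is to exploit the integral representation $\Phi(x)=\int_0^x\varphi(t)\,dt$ together with the monotonicity of the right continuous derivative $\varphi$, which reduces the equivalence to a pair of elementary sandwich inequalities. Since $\varphi$ is non decreasing, for every $x>0$ one has, on the one hand, $\Phi(x)=\int_0^x\varphi(t)\,dt\leq x\varphi(x)$, and on the other hand $x\varphi(x)\leq\int_x^{2x}\varphi(t)\,dt\leq\Phi(2x)$; more generally $\int_a^b\varphi(t)\,dt$ lies between $(b-a)\varphi(a)$ and $(b-a)\varphi(b)$. These bounds, which do not require differentiability of $\Phi$, are the only structural facts about the pair $(\Phi,\varphi)$ that I will use. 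I treat the $\Delta_2^{\infty}$ and the global $\Delta_2$ cases simultaneously, tracking the threshold $x_0$, with $x_0=0$ giving the global statement.

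For the implication $\varphi\in\Delta_2^{\infty}\Rightarrow\Phi\in\Delta_2^{\infty}$, suppose $\varphi(2x)\leq K\varphi(x)$ for $x\geq x_0$. Changing variables $t=2s$ gives $\Phi(2x)=2\int_0^x\varphi(2s)\,ds$. I split this integral at $x_0$: on $[x_0,x]$ the hypothesis yields $\varphi(2s)\leq K\varphi(s)$, so that part is bounded by $2K\int_0^x\varphi(s)\,ds=2K\Phi(x)$, while the integral over $[0,x_0]$ contributes the fixed constant $\Phi(2x_0)$. Since $\Phi(x)\to\infty$, for $x$ large we have $\Phi(2x_0)\leq\Phi(x)$, hence $\Phi(2x)\leq(2K+1)\Phi(x)$, which is the $\Delta_2^{\infty}$ condition for $\Phi$. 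When $x_0=0$ the constant term disappears and one obtains $\Phi(2x)\leq 2K\Phi(x)$ for all $x$, i.e. the global $\Delta_2$ statement.

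For the converse $\Phi\in\Delta_2^{\infty}\Rightarrow\varphi\in\Delta_2^{\infty}$, assume $\Phi(2x)\leq K\Phi(x)$ for $x\geq x_0$. Combining the two sandwich inequalities, $2x\varphi(2x)\leq\int_{2x}^{4x}\varphi(t)\,dt\leq\Phi(4x)$ bounds $\varphi(2x)$ from above, while $\Phi(x)\leq x\varphi(x)$ bounds $\varphi(x)$ from below; dividing,
\[
\frac{\varphi(2x)}{\varphi(x)}\leq\frac{\Phi(4x)/(2x)}{\Phi(x)/x}=\frac{\Phi(4x)}{2\Phi(x)}.
\]
Iterating the $\Delta_2$ hypothesis gives $\Phi(4x)=\Phi(2\cdot 2x)\leq K\Phi(2x)\leq K^2\Phi(x)$ whenever $x\geq x_0$, so that both $x$ and $2x$ exceed $x_0$, whence $\varphi(2x)\leq(K^2/2)\varphi(x)$ for $x\geq x_0$. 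Again $x_0=0$ yields the global version, using $\varphi(0)=0$ to cover $x=0$.

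The argument is essentially routine; the only point requiring care is that $\Phi$ is not assumed differentiable, so one must work throughout with the right continuous derivative $\varphi$ and the integral representation rather than with a pointwise derivative. The key quantitative inputs are the lower bound $\Phi(x)\leq x\varphi(x)$, which converts the growth control of $\Phi$ into a lower bound for $\varphi$, and the single iteration of the $\Delta_2$ condition needed to produce $\Phi(4x)$ from $\Phi(x)$; I expect these two steps to be where the reader should focus, the rest being bookkeeping on the threshold $x_0$.
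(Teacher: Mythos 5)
Your proof is correct, but it takes a genuinely different route from the paper, which disposes of this lemma in one line by citing Maligranda's index machinery (\cite[Thm.~11.7]{M}, characterizing the $\Delta_2$-condition via finiteness of the upper Matuszewska--Orlicz index, together with \cite[Rem.~5, p.~87]{M}, which relates the indices of $\Phi$ and $\varphi$). You instead give a self-contained elementary argument from the integral representation $\Phi(x)=\int_0^x\varphi(t)\,dt$ and the two sandwich bounds $\Phi(x)\leq x\varphi(x)\leq\Phi(2x)$, which are exactly the right structural facts and remain valid for the right continuous derivative without any differentiability of $\Phi$. Both directions check out: the change of variables and the split at $x_0$ (absorbing the constant $\Phi(2x_0)$ using $\Phi(x)\to\infty$) give $\Phi(2x)\leq(2K+1)\Phi(x)$, and the quotient $\varphi(2x)/\varphi(x)\leq\Phi(4x)/(2\Phi(x))\leq K^2/2$ is legitimate since $\varphi(x)\geq\Phi(x)/x>0$ for $x>0$; the threshold bookkeeping correctly yields the global statement when $x_0=0$. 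What the paper's citation buys is brevity and consistency with the index language it already uses elsewhere (e.g.\ around \eqref{compl_ind} and \eqref{delta2-potencias}); what your argument buys is a proof readable without Maligranda's book, with explicit constants in both directions, and a uniform treatment of the $\Delta_2$ and $\Delta_2^{\infty}$ cases.
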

\begin{proof}  It is consequence of \cite[Thm. 11.7]{M} and \cite[Rem. 5, p. 87]{M}.

\end{proof}

The following lemma improves the result on the comment at the beginning of \cite[p. 24]{KR}.

\begin{lem}\label{lem:delta2-equiv-delta2-global} Let $\Psi$ be an $N$-function satisfying the $\Delta_2^{\infty}$-condition. Then there exists an $N$-function $\Psi^*$  such that $\Psi^*\in\Delta_2$, $\Psi\leq\Psi^*$ and for every $a>1$ there exists $x_0=x_0(a)\geq 0$ such that $\Psi^*(x)\leq a\Psi(x)$, for every $x\geq x_0$.  In particular,  every $\Delta_2$ near infinity  $N$-function is equivalent to a $\Delta_2$  $N$-function.
\end{lem}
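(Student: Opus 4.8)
The goal is to construct, from an $N$-function $\Psi$ satisfying only $\Delta_2^\infty$, a genuinely global $\Delta_2$ function $\Psi^*$ that dominates $\Psi$ everywhere yet stays arbitrarily close to $\Psi$ near infinity. The plan is to leave $\Psi$ untouched for large arguments and only \emph{repair} its behavior near the origin, where the global $\Delta_2$ inequality might fail. Concretely, I would fix a threshold $x_0$ past which $\Psi(2x)\leq K\Psi(x)$ holds, and replace $\Psi$ on $[0,x_0]$ by a simpler function — the natural candidate being a linear (or power) function matching $\Psi$ at $x_0$, or more robustly the primitive of a right-continuous density that I control. Since $\Psi$ is an $N$-function, near the origin $\Psi(t)/t\to 0$, so the obstruction to global $\Delta_2$ is precisely the possibly wild oscillation of the derivative $\psi=\Psi'$ on small scales; flattening it out there costs nothing asymptotically.

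The cleanest route is to work with the derivative. Let $\psi$ be the right-continuous derivative of $\Psi$, so $\Psi(x)=\int_0^x\psi(s)\,ds$. By Lemma \ref{lema:delta_2 y der}, $\Psi\in\Delta_2^\infty$ is equivalent to $\psi\in\Delta_2^\infty$, giving constants $K>1$ and $x_1\geq 0$ with $\psi(2s)\leq K\psi(s)$ for $s\geq x_1$. I would then define a new density $\psi^*$ that agrees with $\psi$ for $s\geq x_1$ and, for $s<x_1$, is set equal to a suitable nondecreasing function — for instance $\psi^*(s)=\psi(x_1)\,s/x_1$ — chosen to be nondecreasing, to vanish at $0$, and to lie above $\psi$ on $[0,x_1]$ after possibly enlarging the leading constant. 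Setting $\Psi^*(x)=\int_0^x\psi^*(s)\,ds$ then produces an $N$-function (convexity from monotone $\psi^*$, the two limit conditions from $\psi^*(0)=0$ and $\psi^*(s)\to\infty$). The global $\Delta_2$ inequality for $\Psi^*$ is verified by splitting into the regime $x\geq x_1$, where one reuses the $\Delta_2^\infty$ estimate on $\psi$, and the regime $x<x_1$, where the power/linear form of $\psi^*$ makes $\Delta_2$ immediate; the doubling ratio $\Psi^*(2x)/\Psi^*(x)$ stays bounded across the junction because both pieces are comparable to power behavior there.

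For the two quantitative conclusions: the domination $\Psi\leq\Psi^*$ follows by arranging $\psi\leq\psi^*$ pointwise (which my choice on $[0,x_1]$ guarantees, and which is equality for $s\geq x_1$), integrating from $0$. The near-equality claim — for every $a>1$ there is $x_0(a)$ with $\Psi^*(x)\leq a\Psi(x)$ for $x\geq x_0$ — is the crux. Because $\psi^*=\psi$ beyond $x_1$, the two integrals $\Psi^*(x)$ and $\Psi(x)$ differ only by the fixed constant $\int_0^{x_1}(\psi^*-\psi)\,ds$; since $\Psi(x)\to\infty$, that fixed discrepancy divided by $\Psi(x)$ tends to $0$, so $\Psi^*(x)/\Psi(x)\to 1$, which yields any ratio below $a$ for $x$ large. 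The final sentence, that every $\Delta_2$-near-infinity $N$-function is equivalent to a global $\Delta_2$ one, is then just the observation that $\Psi\sim\Psi^*$ in the sense of \eqref{eq:prec}, since $\Psi\leq\Psi^*$ gives $\Psi\prec\Psi^*$ and the near-equality gives $\Psi^*\prec\Psi$.

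I expect the main obstacle to be purely technical rather than conceptual: verifying the global $\Delta_2$ inequality uniformly across the transition point $x_1$, ensuring the patched density $\psi^*$ is genuinely nondecreasing and right-continuous so that $\Psi^*$ is honestly convex, and confirming that the replacement on $[0,x_1]$ does not accidentally destroy the $N$-function limit conditions. A careful choice making $\psi^*$ linear (equivalently $\Psi^*$ quadratic) on $[0,x_1]$ sidesteps most of this, since linear densities trivially satisfy doubling and glue smoothly in order of magnitude to the $\Delta_2^\infty$ tail.
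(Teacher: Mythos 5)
Your overall strategy is the same as the paper's: keep $\Psi$ untouched for large arguments, repair it only near the origin, and then the near-equality $\Psi^*(x)\leq a\Psi(x)$ for large $x$ and the equivalence $\Psi\sim\Psi^*$ follow exactly as you say, since $\Psi^*-\Psi$ is a bounded function while $\Psi\to\infty$. The verification of global $\Delta_2$ by splitting into a neighbourhood of $0$, a compact annulus around the junction, and the $\Delta_2^{\infty}$ tail is also sound in principle. The gap is in the construction of the patch itself, and it is the heart of the lemma. You need $\psi^*$ to satisfy three constraints simultaneously on $[0,x_1]$: (i) $\psi^*\geq\psi$ (to get $\Psi\leq\Psi^*$), (ii) $\psi^*(x_1^-)\leq\psi(x_1)$ (so that $\psi^*$ is nondecreasing across the junction and $\Psi^*$ is genuinely convex), and (iii) a doubling estimate down to $0$. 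A single linear density $\psi^*(s)=\psi(x_1)s/x_1$ satisfies (ii) and (iii) but not (i): nothing prevents an $N$-function from having $\psi(s)/s\to\infty$ as $s\to 0$ (equivalently $\Psi(t)/t^2\to\infty$, e.g. $\Psi$ behaving like $t^{3/2}$ or $t/\log(1/t)$ near the origin, decorated with jumps in $\psi$ so that $\Delta_2^0$ fails and the patch is actually needed), and then the quadratic $\Psi^*$ lies \emph{below} $\Psi$ near $0$. Enlarging the leading constant to force (i) destroys (ii), hence convexity; replacing the patch by $\max\{\psi(s),\text{linear}\}$ restores (i) and (ii) but reintroduces the $\Delta_2^0$ failure of $\psi$ wherever $\psi$ wins the maximum, so (iii) is lost. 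No choice of a single power-type piece on one interval $[0,x_1]$ escapes this tension.

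The paper resolves it with a multi-scale construction: it extracts a sequence $x_n\downarrow 0$ of scales at which the doubling of $\psi$ fails, $\psi(2x_n)>2\psi(x_n)$, and defines $\psi^*$ inductively on the intervals $[2x_{n+1},2x_n)$ as a maximum of $\psi$ with a piecewise linear/constant profile arranged so that $\psi^*(x_n)=\tfrac12\psi^*(2x_n)$. The failure of doubling at $x_n$ is precisely what guarantees that the halved value $\tfrac12\psi^*(2x_n)$ still dominates $\psi$ on $[2x_{n+1},x_n)$, so (i) and (ii) hold, while the built-in geometric halving yields $\psi^*(2x)\leq 4\psi^*(x)$ near $0$, i.e. (iii) with constant $4$. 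That inductive use of the ``bad'' scales is the idea missing from your argument; the rest of your proposal (domination by integration, the ratio $\Psi^*/\Psi\to1$, and the concluding equivalence) goes through once such a $\psi^*$ is in hand.
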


\begin{proof} We can assume that $\Psi \notin \Delta_2^0$.
Consequently, from Lemma \ref{lema:delta_2 y der} we have that the right continuous derivative $\psi$ of $\Psi$ does not satisfy the $\Delta_2^0$-condition.
Therefore, we obtain a sequence of positive numbers $x_n$, $n=1,2,\ldots$,  such that $x_n\to 0$,
\begin{equation}\label{eq:prop_psi*}2x_{n+1}<x_n<2x_n\quad\text{and}\quad \psi(2x_n)> 2\psi(x_n).
 \end{equation}

We define $\psi^*$ inductively for $n$ on the interval $[2x_n,+\infty)$ of the following way.
We put $\psi^*(x)=\psi(x)$ when  $x\in[ 2x_1,+\infty)$.
Suppose $\psi^*$ defined on $[2x_n,+\infty)$ and we set $\psi^*$ on $[2x_{n+1},2x_n)$ by
\[
 \psi^*(x)=\left\{
\begin{array}{cc}
  \max\left\{\psi(x),\frac{\psi^*(2x_n)}{2x_n}(x-x_n)+\frac{ \psi^*(2x_n)}{2}\right\},& \text{if } x_n\leq x<2x_n\\
   \frac{\psi^*(2x_n)}{2}& \text{if }2x_{n+1} \leq x<x_n
  \end{array}
\right.
\]
Moreover, we define $\psi^*(0)=0$.

Next, we will use induction  to prove that
\begin{enumerate}
 \item\label{item:prop_psi*1} $\psi^*(x_n)=\frac12\psi^*(2x_n)$,
 \item\label{item:prop_psi*2}  $\psi^*$ is non-decreasing $[2x_{n},+\infty)$,
 \item\label{item:prop_psi*3}   $\psi\leq \psi^*$ in  $[2x_{n},+\infty)$.
\end{enumerate}

We suppose $n=1$. Then items \ref{item:prop_psi*2} and \ref{item:prop_psi*3} are obvious. From \eqref{eq:prop_psi*} we have
\[\psi(x_1)<\frac12\psi(2x_1)=\frac12\psi^*(2x_1),\]
and this inequality implies \ref{item:prop_psi*1}.

Assume that properties \ref{item:prop_psi*1}-\ref{item:prop_psi*3} hold for $n>1$.  Clearly $\psi^*$ is non decreasing on each interval $[2x_{n+1},x_n)$ and $[x_n,2x_n)$. Since $\psi$ is right continuous and $\psi(x_n)<2^{-1}\psi(2x_n)\leq 2^{-1}\psi^*(2x_n)$, then $\psi^*$ is continuous at $x_n$. Therefore, $\psi^*$ is non decreasing on $[2x_{n+1},2x_n)$. Suppose $x\in[2x_{n+1},2x_n)$ and $y\geq 2x_n$.  From the definition of $\psi^*$, the inductive hypothesis, item \ref{item:prop_psi*3} and item \ref{item:prop_psi*2}, we obtain
\[\psi^*(x)\leq\max\{\psi(2x_n),\psi^*(2x_n)\}=\psi^*(2x_n)\leq\psi^*(y).\]
This proves item \ref{item:prop_psi*2} on the interval $[2x_{n+1},+\infty)$.
Inequality in item \ref{item:prop_psi*3} holds by inductive hypothesis on $[2x_n,+\infty)$
and it is obvious for  $x\in[x_n,2x_n)$.
If $ x\in[2x_{n+1},x_n)$, then $\psi(x)\leq\psi(x_n)\leq\psi^*(x_n)=\psi^*(x)$.
This proves  \ref{item:prop_psi*3} on the interval $[2x_{n+1},+\infty)$.

Now, using \eqref{eq:prop_psi*} and the already proved item \ref{item:prop_psi*3} for $n+1$, we deduce $\psi(x_{n+1})<2^{-1}\psi(2x_{n+1})\leq 2^{-1}\psi^*(2x_{n+1})$.
Then
\[\psi^*(x_{n+1})=\max\left\{\psi(x_{n+1}),\frac12\psi^*(2x_{n+1}) \right\}=\frac12\psi^*(2x_{n+1}),\]
i.e. we have just proved item \ref{item:prop_psi*1}.

We note that $\psi^*(x_{n+1})=2^{-1}\psi^*(2x_{n+1})\leq 2^{-1}\psi^*(x_{n})$. Consequently $\psi^*(x)\to 0$ when $x\to 0$.
Therefore $\psi^*$ is right continuous at $0$ and indeed it is right continuous on $[0,+\infty)$.
Moreover, since $\psi(x)=\psi^*(x)$ for $x\geq 2x_1$ being $\psi$ the right continuous derivative of an $N$-function, $\psi^*(x)\to +\infty$ when $x\to +\infty$. In this way,
\[\Psi^*(x):=\int_0^x\psi^*(t)dt\]
defines an $N$-function.

Let's see that $\psi^*$ satisfies the $\Delta_2$-condition.
It is sufficient to prove that $\psi^*$ satisfies the $ \Delta_2^0$-condition.
To this end, suppose that  $x\leq x_1$ and take $n\in\mathbb{N}$ such that $x_{n+1}\leq x\leq x_n$. Then
\[\psi^*(2x)\leq \psi^*(2x_n)=2\psi^*(2x_{n+1})=4\psi^*(x_{n+1})\leq 4\psi^*(x).\]
Thus,  $\Psi^* \in  \Delta_2$ and $\Psi\leq \Psi^*$.

It remains to show the inequality $\Psi^*(x)\leq a\Psi(x)$,
for every $a>1$ and sufficiently large $x$. We take $x_0$ sufficiently large to have
\[\frac{1}{a-1}\int_0^{2x_1}\psi^*(t)-\psi(t)dt<\Psi(x_0).\]
Therefore, if  $x>\max\{x_0,2x_1\}$ then
\[\Psi^*(x)=\Psi(x)+\int_0^{2x_1}\psi^*(t)-\psi(t)dt<\Psi(x)+(a-1)\Psi(x)= a\Psi(x).\]

The last statement of the lemma is consequence of   $\Psi(ax)>a\Psi(x)$ when $a>1$.

\end{proof}

The following lemma is essentially known,  because it is basically a consequence  of the fact that $\Psi\in\Delta_2^{\infty}$ if and only if $\Psi\closedprec\Psi$,  \cite[Prop. 4, p. 20]{rao1991theory} and \cite[Cor. 10, p. 30]{rao1991theory}.
However, we prefer to include an alternative proof, as we do not see clearly that the results of \cite{rao1991theory} contemplate the case of  $N$-functions satisfying the $\Delta_2$-condition.

\begin{lem}\label{lem:submultipliativa}
Let $\Phi,\Psi$ be complementary functions.
The next statements are equivalent:
\begin{enumerate}
\item\label{item1} $\Psi \in \Delta_2$ ($\Psi \in \Delta_2^{\infty}$).
\item\label{item2} There exists an $N$-function $\Phi^*$ such that
\begin{equation}\label{eq:caract_delta2}
\Phi(rs)\geq \Phi^*(r)\Phi(s)\quad\mbox{for every }r\geq1,s\geq 0\, (r\geq1,s\geq 1).
\end{equation}
\end{enumerate}
\end{lem}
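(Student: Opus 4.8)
The plan is to pivot on the derivative $\varphi$ of $\Phi$: I will translate the $\Delta_2$-condition on $\Psi$ into a doubling condition on $\varphi$, upgrade this to a \emph{strict} power-type lower bound for $\Phi$ under dilations, and read off the inequality \eqref{eq:caract_delta2} (together with the $N$-function $\Phi^*$) from it; the converse runs the same chain backwards. Throughout I will treat the global case ($s\geq 0$) and the $\Delta_2^{\infty}$ case ($s\geq 1$) in parallel, keeping track of the thresholds.

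\emph{Step 1 (reformulation via derivatives).} By Lemma \ref{lema:delta_2 y der}, $\Psi\in\Delta_2$ (resp. $\Psi\in\Delta_2^{\infty}$) if and only if its derivative $\psi$ does. Since $\psi=\varphi^{-1}$, the substitution $y=\varphi(x)$ turns $\psi(2y)\leq C\psi(y)$ into $\varphi(Cx)\geq 2\varphi(x)$, and conversely; hence, modulo the obvious thresholds, $\Psi\in\Delta_2$ (resp. $\Delta_2^{\infty}$) is equivalent to the existence of $C>1$ with $\varphi(Cx)\geq 2\varphi(x)$ for all $x\geq 0$ (resp. $x\geq x_0$). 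This doubling of $\varphi$, rather than the trivially available convexity bound $\Phi(Rx)\geq R\Phi(x)$, is what will yield an exponent strictly above $1$.

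\emph{Step 2 (direction $(1)\Rightarrow(2)$).} Iterating Step 1 gives $\varphi(C^n x)\geq 2^n\varphi(x)$; integrating over $[C^n x,C^{n+1}x]$ and using $\Phi(x)\leq x\varphi(x)$ produces $\Phi(C^{n+1}x)\geq (2C)^n(C-1)\Phi(x)$, and interpolating in $n$ yields constants $c>0$ and $q=1+\log_C 2>1$ with $\Phi(Rx)\geq c R^q\Phi(x)$ for every $R\geq 1$ (and $x\geq 0$ resp. $x\geq x_0$, which I may take $\geq 1$). In the global case I simply set $\Phi^*(R)=cR^q$, shrinking $c\leq 1$ so that the case $R=1$ is covered. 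In the $\Delta_2^{\infty}$ case the power bound only holds for $s\geq x_0$, so for $1\leq s<x_0$ I will argue separately via $\Phi(rs)\geq\Phi(r)$ (as $rs\geq r$) and $\Phi(s)\leq\Phi(x_0)$; it then suffices that $\Phi^*$ lie below both $cR^q$ and $\Phi/\Phi(x_0)$. Defining $\Phi^*(R)=\int_0^R\min\{cq\,t^{q-1},\varphi(t)/\Phi(x_0)\}\,dt$ gives a genuine $N$-function (the integrand is nonnegative, nondecreasing, vanishes at $0$ and tends to $\infty$) satisfying both bounds, which establishes \eqref{eq:caract_delta2}.

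\emph{Step 3 (direction $(2)\Rightarrow(1)$).} Given $\Phi^*$, superlinearity $\Phi^*(r)/r\to\infty$ lets me pick $r_0$ with $\Phi^*(r_0)\geq 2r_0$, so \eqref{eq:caract_delta2} gives the $\nabla_2$-type inequality $\Phi(r_0 s)\geq 2r_0\,\Phi(s)$; iterating as in Step 2 recovers $\Phi(Rs)\geq cR^q\Phi(s)$ with $q>1$. Combining $\Phi(Rs)\leq Rs\,\varphi(Rs)$ with $\Phi(s)\geq\tfrac{s}{2}\varphi(s/2)$ then gives $\varphi(2Rx)\geq\tfrac{c}{2}R^{q-1}\varphi(x)$, and choosing $R$ so large that $\tfrac{c}{2}R^{q-1}\geq 2$ returns the condition $\varphi(Cx)\geq 2\varphi(x)$ of Step 1, whence $\Psi\in\Delta_2$ (resp. $\Delta_2^{\infty}$) by Lemma \ref{lema:delta_2 y der}. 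The step I expect to be the most delicate is the lower end of the $\Delta_2^{\infty}$ case: the iterated power bound is genuinely available only for $s\geq x_0$, so the argument must be supplemented on $1\leq s<x_0$ and, simultaneously, $\Phi^*$ must be built as an honest $N$-function respecting \emph{two} competing upper bounds — which is exactly what the min-of-derivatives construction achieves.
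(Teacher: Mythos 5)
Your proof is correct, but it follows a genuinely different route from the paper's. For $(1)\Rightarrow(2)$ the paper invokes the Matuszewska--Orlicz index machinery: $\Psi\in\Delta_2$ gives $\beta_\Psi<\infty$, hence $\alpha_\Phi>1$ by \eqref{compl_ind}, and \eqref{delta2-potencias} then yields $\Phi(rs)\geq Kr^{\nu}\Phi(s)$ directly, with $\Phi^*(r)=Kr^\nu$. You reach the same power-type $\Phi^*$ by elementary means, translating $\psi\in\Delta_2$ into the doubling $\varphi(Cx)\geq 2\varphi(x)$ via $\psi=\varphi^{-1}$ and iterating/integrating; this is self-contained but reproves by hand what \eqref{delta2-potencias} packages. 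For $(2)\Rightarrow(1)$ the difference is sharper: the paper dualizes, using $x\leq\Phi^{-1}(x)\Psi^{-1}(x)\leq 2x$ to turn submultiplicativity of $\Phi$ into $\Psi(xy/4)\leq\Psi^*(x)\Psi(y)$ and reading off $\Delta_2$ for $\Psi$, whereas you stay entirely on the $\Phi$ side, extract a $\nabla_2$-type inequality $\Phi(r_0s)\geq 2r_0\Phi(s)$, convert it back to derivative doubling, and pass to $\psi$ through Lemma \ref{lema:delta_2 y der}; this avoids inverse functions and the complementary function $\Psi^*$ of $\Phi^*$ altogether. A further genuine contribution of your version is that you treat the $\Delta_2^{\infty}$ case explicitly --- the paper declines to, deferring it to the literature --- and your construction $\Phi^*(R)=\int_0^R\min\{cq\,t^{q-1},\varphi(t)/\Phi(x_0)\}\,dt$ correctly produces an $N$-function satisfying both competing bounds needed to cover the range $1\leq s<x_0$. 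All the individual estimates check out (the iteration $\Phi(C^{n+1}x)\geq(2C)^n(C-1)\Phi(x)$, the bounds $\Phi(s)\leq s\varphi(s)$ and $\Phi(s)\geq\tfrac{s}{2}\varphi(s/2)$, and the threshold bookkeeping), so I see no gap; your argument is longer but more elementary and more complete than the one in the paper.
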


\begin{comentario} We want to emphasize that the difference between the conclusions in item \ref{item2} of the previous lemma
is  that \eqref{eq:caract_delta2} holds for $s\geq 0$ or $s\geq 1$ depending on
$\Psi\in\Delta_2$ or $\Psi\in\Delta_2^{\infty}$,  respectively.

\end{comentario}

\begin{proof}
 In virtue of the comment that precedes the lemma, we only consider the case   $\Psi \in \Delta_2$.

\ref{item1})$\Rightarrow$\ref{item2}).
As a consequence of the $\Delta_2$-condition on $\Psi$, \eqref{compl_ind} and \eqref{delta2-potencias},
we get for every $1<\nu<\alpha_{\Phi}$ a constant $K=K_{\nu}>0$  such that

\begin{equation}\label{delta2-consecuencia}
\Phi(r s)\geq Kr^{\nu}\Phi(s),
\end{equation}
for any $1<\nu<\alpha_{\Phi}$,  $s\geq 0$ and $r>1$. This proves  \eqref{eq:caract_delta2} with $\Phi^*(r)=kr^\nu$, which is an $N$-function.

\ref{item2})$\Rightarrow$\ref{item1})
Next, we follow  \cite[p. 32, Prop. 13]{rao1991theory} and \cite[p. 29, Prop. 9]{rao1991theory}.
Assume that
\[
\Phi^*(r)\Phi(s)\leq \Phi(rs)\;\;r>1,\;s\geq 0.
\]
Let $u=\Phi^*(r)\geq \Phi^*(1)$ and $v=\Phi(s)\geq 0$. By a well known inequality \cite[p. 13, Prop. 1]{rao1991theory} and \eqref{eq:caract_delta2},   for $u\geq \Phi^*(1)$ and $v> 0$ we have
\[
\frac{uv}{\Psi^{-1}(uv)}\leq \Phi^{-1}(uv)\leq{\Phi^*}^{-1}(u)\Phi^{-1}(v)\leq
\frac{4uv}{{\Psi^*}^{-1}(u)\Psi^{-1}(v)},
\]
then
\[
{\Psi^*}^{-1}(u)\Psi^{-1}(v)\leq 4 \Psi^{-1}(uv).
\]
If we take $x=\Psi^{-1}_1(u)\geq \Psi^{-1}_1(\Phi^*(1))$ and $y=\Psi^{-1}(v)\geq 0$, then
\[
\Psi\left(\frac{xy}{4}\right)\leq \Psi^*(x)\Psi(y).
\]
Now, taking  $x\geq \max\{8,{\Psi^*}^{-1}(\Phi^*(1))\}$ we get that $\Psi \in \Delta_2$.
\end{proof}

\begin{comentario} Note that if $\Phi^*$ satisfies \eqref{eq:caract_delta2} then $\Phi^*\prec \Phi$.
 \end{comentario}

\noindent\textbf{Proof Lemma \ref{lem_coer}} First, we suppose that $\Psi \in \Delta_2$.
Let $\Phi^*$ be an $N$-function satisfying \eqref{eq:caract_delta2}.
By the inequality \eqref{amemiya-ine}, for $r>1$ we have
\[
\int_0^T \Phi(|u|)\,dt\geq
\Phi^*(r) \int_0^T \Phi(r^{-1}|u|)\,dt\geq
\Phi^*(r)\{r^{-1}\|u\orlnor-1\}.
\]
Now, we choose $r=\frac{\|u\orlnor}{2}$; and, as $\|u\orlnor\to\infty$ we can assume $r>1$.
From \cite[Thm. 2 (b)(v), p. 16]{rao1991theory} and $\Phi_0\llcurly \Phi^*$,  we get
\begin{equation}\label{eq:caso-delta-2}
\lim\limits_{\|u\orlnor \to \infty} \frac{\int_0^T \Phi(|u|)\,dt}{\Phi_0(k\|u\orlnor)}\geq
\lim\limits_{\|u\orlnor \to \infty} \frac{\Phi^*\left(\frac{\|u\orlnor}{2}\right)}{\Phi_0(k\|u\orlnor)}
=\infty.
\end{equation}
Now, if $\Psi\in\Delta_2^{\infty}$ but $\Psi\notin\Delta_2$, we  use Lemma \ref{lem:delta2-equiv-delta2-global}.
Then, there exists an $N$-function $\Psi_1$ such that $\Psi_1\in\Delta_2$ and  $\Psi_1\sim\Psi\leq \Psi_1$.
Let $\Phi_1$ be the complementary function of $\Psi_1$. Then $\Phi\sim\Phi_1\leq \Phi$ (see \cite[Thm. 3.1]{KR}) and $\|\cdot\orlnor$ and $\|\cdot\|_{L^{\Phi_1}}$ are equivalent norms  (see \cite[Thm. 13.2 and Thm. 13.3]{KR}).
Thus, there exists an $N$-function $\Phi_1^*\prec \Phi_1$ (consequently $\Phi_1^*\prec \Phi$)
satisfying  \eqref{eq:coer_mod}
with the $N$-functions $\Phi_1$ and $\Phi_1^*$ instead of $\Phi$ and $\Phi^*$, respectively.
Let $C>0$ be a constant such that  $\|\cdot\orlnor\leq C\|\cdot\|_{L^{\Phi_1}}$. Then

\[\lim\limits_{\|u\orlnor \to \infty}\frac{\int_0^T \Phi(|u|)\,dt}{\Phi_0(k\|u\orlnor)}\geq \lim\limits_{\|u\orlnor \to \infty} \frac{\int_0^T \Phi_1(|u|)dt}{\Phi_0(kC\|u\|_{L^{\Phi_1}})}=+\infty.\]

Finally, if $\Phi_0$ is an $N$-function, then $\Phi_0(x)\geq \alpha |x|$ for  $\alpha$ small enough and $|x|>1$.
Therefore \eqref{eq:coer_mod} holds for $\Phi_0(x)=|x|$, then \cite[Lemma 5.2]{ABGMS2015}
implies  $\Psi\in\Delta_2^{\infty}$. \qed

\begin{defi}We define the  functionals $J_{C,\varphi}:\lphi\to (-\infty,+\infty]$ and $  H_{C,\varphi}:\rr^n\to \rr$, where $C>0$ and $\varphi:[0,+\infty)\to [0,+\infty)$, by
\begin{equation}\label{func_phi}
  J_{C,\varphi}(u):= \rho_{\Phi}\left(u\right)-C\varphi\left(\|u\orlnor\right),
\end{equation}
 and

\begin{equation}\label{eq:functional_H-bis}
 H_{C,\varphi}(x):=\int_0^TF(t,x)dt-C\varphi(2|x|),
\end{equation}
respectively.
\end{defi}

\vspace{.4cm}

\noindent\textbf{Proof. Theorem \ref{coercitividad-r} }. By the decomposition $u=\overline{u}+\tilde{u}$,   Cauchy-Schwarz's inequality
and \eqref{holder_cont-mu}, we have
\begin{equation}\label{cota-diferencia-F}
\begin{split}
&\left|\int_0^T F(t,u)-F(t,\overline{u})\,dt\right|=
\left|\int_0^T \int_0^1 \nabla F(t,\overline{u}+s\tilde{u}(t))\ccdot \tilde{u}(t) \,ds \,dt\right|
\\
&\leq \int_0^T \int_0^1 b_1(t)\Phi_0'(|\overline{u}+s\tilde{u}(t)|)|\tilde{u}(t)|\,ds\,dt+
\int_0^T \int_0^1 b_2(t)|\tilde{u}(t)|\,ds\,dt
\\
&=:I_1+I_2.
\end{split}
\end{equation}
First, by H\"older's and Sobolev-Wirtinger's inequalities we estimate $I_2$ as follows
\begin{equation}\label{cota-i2}
I_2\leq \|b_2\|_{L^1} \|\tilde{u}\|_{L^{\infty}}\leq
C_1\|u'\orlnor,
\end{equation}
 where $C_1=C_1(\|b_2\|_{L^1}, T)$.

Note that, since $\Phi_0'$ is an increasing function  and $\Phi_0'(x)\geq 0$ for $x\geq 0$, then
$\Phi'_0(a+b)\leq \Phi'_0(2a)+\Phi'_0(2b)$ for every $a,b\geq 0$.
In this way, we have
\begin{equation}\label{pot-suma}
\Phi'_0(|\overline{u}+s\tilde{u}(t)|)\leq
\Phi'_0(2|\overline{u}|)+\Phi'_0(2\|\tilde{u}\|_{L^{\infty}}),
\end{equation}
for every $s \in [0,1]$.  Now,  inequality \eqref{pot-suma}, H\"older's and Sobolev-Wirtinger's inequalities imply that
\begin{equation}\label{cota-i1}
\begin{split}
I_1&
\leq \Phi'_0(2|\overline{u}|) \|b_1\|_{L^1} \|\tilde{u}\|_{L^{\infty}}+\Phi'_0(2\|\tilde{u}\|_{L^\infty})
 \|b_1\|_{L^1}\|\tilde{u}\|_{L^\infty}
\\
&\leq C_2 \bigg\{ \Phi'_0(2|\overline{u}|) \|u'\orlnor
+\Phi'_0(C_3\|u'\orlnor) \|u'\orlnor\bigg\},
\end{split}
\end{equation}
where $C_2=C_2(T, \|b_1\|_{L^1} )$ and $C_3=C_3(T)$.
Next, by Young's inequality with complementary functions $\Phi_1$ and $\Psi_1$, we obtain
\begin{equation}\label{cota-i1-parcial}
 \begin{split}
\Phi_0'(2|\overline{u}|) \|u'\orlnor
&\leq
\Psi_1(\Phi_0'(2|\overline{u}|))+
\Phi_1(\|u'\orlnor).
\end{split}
\end{equation}
We have that any $N$-function $\Phi_0$ satisfies the inequality $x\Phi_0'(x)\leq \Phi_0(2x)$ (see \cite[p. 17]{rao1991theory}).
Moreover, since $\Phi_0\llcurly\Phi_1$ there exists $x_0=x_0(\Phi_0,\Phi_1,T)\geq 0$ such that $\Phi_0(2C_3x)\leq \Phi_1(x)$
for every $x\geq x_0$. Therefore, $\Phi_0(2C_3x)\leq \Phi_1(x)+C_4$ with $C_4=\Phi_0(2x_0)$. The previous observations imply that
\begin{equation}\label{cota-i1-parcial-segunda}
\Phi_0'(C_3\|u'\orlnor) \|u'\orlnor
\leq
C_3^{-1}(\Phi_1(\|u'\orlnor)+C_4).
\end{equation}
From \eqref{cota-i1}, \eqref{cota-i1-parcial}, \eqref{cota-i1-parcial-segunda} and \eqref{cota-i2}, we have
\begin{equation}\label{cota-i1-i2}
\begin{split}
I_1+I_2
&
\leq C_5
\bigg\{
\Psi_1(\Phi_0'(2|\overline{u}|))
+\Phi_1(\|u'\orlnor)
+\|u'\orlnor +1
\bigg\},\\
\end{split}
\end{equation}
with $C_5$ depending on $\Phi_0, \Phi_1, T, \|b_1\|_{L^1}$ and $\|b_2\|_{L^1} $.

Finally, using  \eqref{eq:control_inf},  \eqref{cota-diferencia-F} and
\eqref{cota-i1-i2}, we get
\begin{equation}\label{cota_inf_I}
\begin{split}
I(u)&
\geq\rho_{\Phi}(u')+\int_0^TF(t,u)\ dt
\\
&=\rho_{\Phi}( u')+ \int_0^T \left[F(t,u)-F(t,\overline{u})\right]\ dt
+  \int_0^TF(t,\overline{u})\ dt
\\
&\geq \rho_{\Phi}( u')
-C_5 \Phi_1(\|u'\orlnor)
+\int_0^TF(t,\overline{u})\ dt-
C_5 \Psi_1(\Phi_0'(2|\overline{u}|))-
C_5
\\
&\geq
J_{C_5,\Phi_1}(u')
+H_{C_5, \Psi_1\circ\Phi_0'}(\overline{u})
-C_5.
\end{split}
\end{equation}

Let $u_n$ be  a sequence in $\wphi$ with
$\|u_n\sobnor\to\infty$ and we have to prove that $I(u_n)\to\infty$.
On the contrary, suppose  that for a subsequence,
still denoted by $u_n$, $I(u_n)$ is upper bounded, i.e. there exists $M>0$ such that $|I(u_{n})|\leq M$.
As $\|u_n\sobnor\to\infty$, from Lemma \ref{infinito-a-prom-upunto},  we have $|\overline{u}_n|+\|u'_n\orlnor\to \infty$. Passing to a subsequence is necessary, still denoted $u_n$, we can assume that $|\overline{u}_n|\to \infty$ or $\|u'_n\orlnor\to \infty$.
Now, Lemma \ref{lem_coer} implies that the functional $J_{C_5,\Phi_1}(u')$ is coercive;
and, by \eqref{eq:propiedad-coercividad-phi0},
the functional $H_{C_5,\Psi_1\circ\Phi_0'}(\overline{u})$ is also coercive, then
$J_{C_5,\Phi_1}(u'_n) \to \infty$ or $H_{C_5,\Psi_1\circ\Phi_0'}(\overline{u}_n)\to \infty$.
From the condition \ref{item:condicion_a}  on $F$, we have that on a bounded set the functional $H_{C_5,\Psi_1\circ\Phi_0'}(\overline{u}_n)$ is lower bounded and
also $J_{C_5,\Phi_0'}(u'_n)\geq 0$.
Therefore,  $I(u_n)\to\infty$ as $\|u_n\sobnor\to\infty$ which contradicts the initial assumption on the behavior of $I(u_n)$.

Let $\{u_n\}\subset \wphi$  be a  minimizing sequence for the problem
\linebreak $\inf\{I(u)|u\in\wphi\}$.
Since  $I(u_n)$, $n=1,2,\ldots$,  is upper bounded, the previous part of the proof shows that $\{u_n\}$ is norm bounded. Hence, by virtue of  \cite[Cor. 2.2]{ABGMS2015}, we can assume, taking a subsequence if necessary, that $u_n$ converges uniformly to a $T$-periodic continuous   function $u$. As $\{u'_n\}$ is a norm bounded sequence in $\lphi$,
there exists a subsequence, again denoted by $u'_n$, that converges to a function $v\in\lphi$ in the weak* topology of $\lphi$.
From this fact and the uniform convergence of $u_n$ to $u$, we obtain that
\[
\int_0^T\xi'\cdot u\ dt=\lim_{n\to\infty}\int_0^T\xi'\cdot u_n \ dt=
-\lim_{n\to\infty}\int_0^T\xi\cdot u'_n\ dt=-\int_0^T\xi\cdot v\ dt,
\]
for every $T$-periodic function $\xi\in C^{\infty}([0,T],\rr^d)\subset\epsi$.
Thus $v=u'$ a.e. $t\in [0,T]$ (see \cite[p. 6]{mawhin2010critical}) and $u\in\wphi([0,T],\rr^d)$.

Now, taking into account the relations $\left[L^1\right]^*=L^{\infty}\subset  \epsi$ and $\lphi\subset L^1$, we have that $u'_n$ converges to $u'$ in the weak topology of $L^1$. Consequently,  from the semicontinuity of $I$ (see \cite[Lemma 6.1]{ABGMS2015})  we get
\[I(u)\leq  \liminf_{n\to\infty}I(u_n)=\inf\limits_{v\in\wphi_T}I(v).\]
Hence $u\in \wphiet$ is a minimun of $I$ on $\wphi_T$.

For the second part of the theorem, assume that $u$ is a minimum of $I$ with $d(u',\ephi)<\lambda$.
Since $I$ is G\^ateaux differentiable at $u$ (see  \cite[Thm. 3.2]{ABGMS2015}),
therefore $I'(u)\in (\wphi_T)^{\perp}$. Thus,
\[\int_0^T D_y\mathcal{L}(t,u(t),u'(t))\cdot v'(t)dt =-\int_0^T D_x\mathcal{L}(t,u(t),u'(t))\cdot v(t)dt,\]
for every  $v\in \wphi_T$.
From \cite[Eq. (26)]{ABGMS2015} we have
\[D_y\mathcal{L}(t,u(t),u'(t))\in \lpsi([0,T],\rr^n)\hookrightarrow L^1([0,T],\rr^n);\]
and,  from  \cite[Eq. (24)]{ABGMS2015}, it follows that $D_x\mathcal{L}(t,u(t),u'(t))\in L^1([0,T],\rr^n)$.
Consequently, from \cite[p. 6]{mawhin2010critical}
(note that $\wphi_T$  includes the periodic test functions) we obtain the absolutely continuity of $D_y\mathcal{L}(t,u(t),u'(t))$ and that the differential equations in \eqref{ProbPrin-gral} are satisfied. The strict convexity of $\mathcal{L}(t,x,y)$ with respect to $y$ and the $T$-periodicity with respect to $t$ imply the boundary conditions in  \eqref{ProbPrin-gral} (see \cite[Thm. 4.1]{ABGMS2015}).\qed

 \section{An example}\label{sec:examples}

 In this section we develop  an application of our main result
so that the reader can appreciate the innovations that brings.

The main novelty  of our work is that we obtain existence of minima of $I$ associated with  lagrangian functions $\mathcal{L}(t,x,y)$ that do not satisfy a power-like grow condition on $y$.

In fact, it is possible to apply Theorem \ref{coercitividad-r} to lagrangians $\mathcal{L}=\mathcal{L}(t,x,y)$ with  exponential grow on the variable $y$. For example, suppose that
  \[\mathcal{L}(t,x,y)=f(y)+F(t,x),\]
with $f:\rr^n\to\rr$ differentiable, strictly convex and $f(y)\geq e^{|y|}$. We define for $n\geq 1$
\[\Phi(y)=e^y-\sum\limits_{i=0}^{n-1}\frac{y^i}{i!}.  \]
It is easy to see that $\Phi:[0,+\infty)\to [0,+\infty)$  is an $N$-function.
From \cite[Ex. 3, p. 85]{M} we know that $\alpha_{\Phi}=n$.
As consequence of \eqref{compl_ind} we have $\beta_{\Psi}=\frac{n}{n-1}<\infty$ and consequently $\Psi\in\Delta_2$.
From \eqref{delta2-potencias}, for every $1<p<n$ there exists $C_p>0$ such that
\[\Phi(rs)\geq C_pr^p\Phi(s),\quad r>1,s>0.\]
Then, the complementary pair $(\Phi,\Psi)$ and the $N$-function $\Phi^*(r):=r^p$ satisfy Lemma \ref{lem_coer} for every $1<p<n$.
Now, we fix arbitrary real numbers $1<p_0<p_1<p<n$ and we consider $\Phi_i=r^{p_i}$, $i=0,1$.
Then $\Phi_0\llcurly \Phi_1\llcurly \Phi^*$.
The conditions  \eqref{holder_cont-mu} and     \eqref{eq:propiedad-coercividad-phi0} become

\begin{equation}\label{eq:A6_ejem1}
  \left| \nabla F(t,x) \right|\leq b_1(t)|x|^{p_0-1}+b_2(t), \quad b_1,b_2\in L^1([0,T]),
\end{equation}
and
\begin{equation}\label{eq:ejem_propiedad-coercividad-phi0}
\lim_{|x|\to\infty}\frac{\int_{0}^{T}F(t,x)\ dt}{|x|^{(p_0-1)q_1 }}=+\infty,\quad q_1=p_1/(p_1-1),
\end{equation}
respectively. Since $n$ is an arbitrary positive integer, the pair $p_0$ and $p_1$ of real numbers with $1<p_0<p_1$ is also arbitrary.
For clarity,  assume that $F(t,x)=b(t)|x|^{\sigma}$, for some $1<\sigma<\infty$ and $b\in L^1([0,T])$.
We note that this $F$ satisfies \ref{item:condicion_a} and \ref{item:condicion_c}.
Now, we choose any $1<p_0$ with $p_0-1<\sigma<p_0$ and we take $p_1$ with $p_1>\sigma(\sigma-p_0+1)^{-1}$.
Then, \eqref{eq:A6_ejem1}
 and \eqref {eq:ejem_propiedad-coercividad-phi0} hold.
In conclusion, the action integral $I$ associated with the Lagrangian $\mathcal{L}(t,x,y)=f(y)+b(t)|x|^{\sigma}$   has minimum for any $1<\sigma$.

\def\cprime{$'$}

\end{document}